\theoremstyle{definition}
\newtheorem{theorem}{Theorem}
\newtheorem{definition}[theorem]{Definition}
\newtheorem{corollary}[theorem]{Corollary}
\newtheorem{lemma}[theorem]{Lemma}
\numberwithin{theorem}{section}
\newcommand{\F}{\mathbb{F}}
\newcommand{\X}{\mathbf{X}}
\newcommand{\Pp}{\mathbb{P}}
\newcommand{\Glm}{\mathcal{G}_{\ell,m}}
\newcommand{\Mts}{\mathbb{M}^{\ell \times m}(\F_{q^2})}
\newcommand{\Df}{\Delta(\ell, m)}
\newcommand{\Dfts}{\Delta(3, 6)}
\newcommand{\CO}{C(\mathbb{O}_{3,6})}
\newcommand{\Oll}{\mathbb{O}_{\ell, 2\ell}}
\newcommand{\Ots}{\mathbb{O}_{3,6}}
\title{Computing the minimum distance of the $C(\mathbb{O}_{3,6})$ polar Orthogonal Grassmann code with elementary methods}
\author{Sarah Gregory, Fernando Pi\~nero--Gonz\'alez, \\ Doel Rivera--Laboy, Lani Southern}
\begin{document}
\maketitle
\begin{abstract}
    The polar orthogonal Grassmann code $\CO$ is the linear code associated to the Grassmann embedding of the Dual Polar space of $Q^+(5,q)$. In this manuscript we study the minimum distance of this embedding. We prove that the minimum distance of the polar orthogonal Grassmann code $\CO$ is $q^3-q^3$ for $q$ odd and $q^3$ for $q$ even. Our technique is based on partitioning the orthogonal space into different sets such that on each partition the code $\CO$ is identified with evaluations of determinants  of skew--symmetric matrices. Our bounds come from elementary algebraic methods counting the zeroes of particular classes of polynomials. We expect our techniques may be applied to other polar Grassmann codes.
    
\end{abstract}

\section{Introduction}
Let $q$ be a prime power and $\F_q$ denote the finite field of $q$ elements. The Grassmannian, $\mathcal{G}_{\ell, m}$, is the collection of all subspaces of dimension $\ell$ of a vector space $V$ of dimension $m$. We take $V=\F_q^m.$ This is a highly interesting and well studied geometry with a rich algebraic structure. 

It is well known that the Grassmannian $\mathcal{G}_{\ell, m}$ may be embedded into the projective space $\Pp(\binom{m}{\ell}-1, \F_q) $ through the Pl\"ucker embedding, which is described as follows. For each vector space $W \in \mathcal{G}_{\ell, m}$, pick an $\ell \times m$ matrix $M_W$   such that the rowspace of $M_W$ is $W$. Let $I_1, I_2, \ldots, I_{\binom{m}{\ell}}$ denote all the subsets of the set $[m]$ with  $\ell$ elements.  Then $M_W$ is mapped to the projective point $$p_W := \left(det_{I_1}M_W : det_{I_2}M_W : \cdots : det_{I_{\binom{m}{\ell}}}M_W\right),$$ where the i-th position is the value of the  determinant of the submatrix of $M_W$ with the columns in the set $I_i$  (i.e. the $\ell$ minor of $M_W$ defined by the columns of $I_i$). The map $W \mapsto p_W$ is well--defined up to projective equivalence. This map is also the embedding of the dual space defined by the point--line incidence relations of the Grassmannian in projective space.


Linear codes may be used to study the Grassmannian. The Grassmann code is defined as the linear code whose generator matrix is defined by taking all of the points $p_W$ as its column vectors. Grassmann codes were introduced for $q = 2$ in \cite{R1, R2} and for general $q$, in \cite{Nogin}, Nogin studied the parameters of this linear code, denoted $C(\ell, m)$. The minimum distance of the Grassmann code is of particular importance as it determines the maximum number of points a hyperplane has in common with the points of the Grassmannian.

Polar Grassmannians are special subvarieties of the Grassmannian. If $B: V\times V \rightarrow \F$ is a bilinear form, a polar Grassmannian is a subvariety of the Grassmannian where the subspaces $W \in \mathcal{G}_{\ell, m} $ satisfy the relation $B(x,y) = 0 \forall \ x,y \in W$. If the form $B$ is a quadratic form, the subvariety is known as an orthogonal Grassmannian and he subspaces satisfying $B(x,y) = 0$ are the nonsingular spaces. If the form $B$ is an alternant form, the subvariety is known as a symplectic Grassmannian and the subspaces satisfying $B(x,y) = 0$ are the totally isotropic spaces. If $m = 2\ell$, then the symplectic Grassmannian is also known as the Lagrangian Grassmannian. If the form $B: V \times V \rightarrow \F$ is a sesquilinear form, the subvariety $\{W \in \Glm \ : \ B(v,w) = 0 \ \forall v,w \in W \}$  is known as a unitary Grassmannian or a Hermitian Grassmannian and the subspaces satisfying $B(x,y) = 0$ are the totally isotropic spaces. These subvarieties also define dual polar spaces with a defined incidence geometry of points and lines. The Pl\"ucker embedding is also used to embed the varieties (and the associated geometries) into the appropriate projective spaces. 

The dimension of these embeddings when the bilinear forms are nondegenerate were determined by Blok and Cooperstein \cite{blok2010generating} and for more general forms (with general Witt index) by Cardinali, Giuzzi and Pasini \cite{PolarSurvey}. Much less is known about the minimum distance of these codes. In particular the distance is known when $\ell = 2$ (i.e. the Grassmannian of lines) and some cases where $\ell = 3$.  Cardinali and Giuzzi \cite{Symplectic} determined the minimum distance of polar Grassmann codes under a symplectic form (polar symplectic Grassmannian) , under an orthogonal form (polar orthogonal Grassmannian) \cite{cardinali2014line, Cardinali_2018} and under a Hermitian form (polar Hermitian Grassmannian) \cite{Cardinali} for $\ell = 2$ and all $m$. In \cite{Symplectic} for $\ell = 3 $ and $m = 6$ Cardinali and Giuzzi state the full weight enumerator of the corresponding symplectic Grassmann code. Cardinali and Giuzzi in \cite{CG18} also determine a way to encode the elements of the polar Hermitian space as a way to aid in encoding and decoding codes from the polar Hermitian Grassmannian. 


Cooperstein \cite{COOPERSTEIN1997849} and de Bruyn \cite{DB10}  have also studied the polar Hermitian Grassmannian as a dual polar space. Cooperstein proved that for $m = 2\ell$ the polar Hermitian Grassmannian may be embedded in $\Pp(\binom{m}{\ell}-1, \F_{q})$.  De Bruyn and Pralle \cite{DP08} characterized the hyperplanes of the dual polar space $DH(5,q^2)$. The hyperplanes of that dual polar space are in $1$--$1$ correspondence with the codewords of $C(\mathbb{H}_{3,6})$. Thus the weight distributuon of $C(\mathbb{H}_{3,6})$ is known. The benefit of our technique is that it may be readily generalized to  $C(\mathbb{H}_{4,8})$. In contrast, the techniques used in \cite{DP08} do not generalize to the case $DH(7,q^2)$. We also apply this technique to determining the minimum distance of the polar Orthogonal Grassmann code $\CO$.


\section{Preliminaries}

In this manuscript $q$ denotes a prime power, $\F_q$ denotes the field with $q$ elements and $\F_{q^2}$ denotes the field with $q^2$ elements. The collection of all $\ell\times m$ matrices over $\F_{q^2}$ is denoted by $\Mts$.

\begin{definition}
Let $V$ be a vector space of dimension $m$ over the field $\F$. The Grassmannian $\Glm(V)$ over $\F$ is the set of all $\ell$-dimensional subspaces of $V$.
\end{definition}

Since $V$ is an $m$--dimensional vector space over $\F$, $V$ is isomorphic to $\F^m$. When the choice of $V$ is clear we may omit $V$ from the notation and denote the Grassmannian by $\Glm$.

\begin{definition}
Let $m$ be an integer. We shall denote by $J$ the following $m \times m$ matrix whose entries are $0$ or $1$.

$$ J_{i,j} := 
  \begin{cases} 
       1 & \makebox { if } j = m+1-i \\
       0 & \makebox{ if } j \neq m+1-i
   \end{cases}
$$
\end{definition}

\begin{definition}

Let $V$ be a vector space over $\F$.  Suppose that $B: V \times V \rightarrow \F$ satisfies 
\begin{itemize}
    \item $B(x+x', y) =  B(x,y) + B(x',y)$
    \item $B(x, y+y') =  B(x,y) + B(x,y')$
    \item $B(\alpha x, y) = \alpha B(x,y)$
    \item $B(x, \alpha y) = \alpha B(x,y)$
    
\end{itemize}
for all $\alpha \in F, x,y \in V$. We say that $B$ is a \emph{symmetric form}.
\end{definition}

In our work we shall use the symmetric form given by $B(x,y) := xJ y^{T}$ with associated quadric $x_1x_{2m} + x_{2}x_{2m-1} + \cdots + x_{m}x_{m+1}$. This Hermitian form is different from the one used by Cardinali and Giuzzi \cite{Cardinali, CG18} and by de Bruyn \cite{DB10}. As in the case of the Hermitian form $J$, we can neatly partition the Grassmann subvariety into Schubert cells. These Schubert cells are represented by subspaces of skew--symmetric matrices. In the case of even characteristic the skew--symmetric matrices will have zeroes on the diagonals. We bound the number of zeroes of particular classes polynomials using elementary techniques to establish bounds on the minimum distance of the polar orthogonal Grassmann code.

\begin{definition}
Let $V$ be a vector space of dimension $m$ over the field $\F$. Let $Q$ a quadratic form and let $B$ be its associated bilinear form. Let $W \in \Glm$ where $m = 2\ell$. We say $W$ is \emph{totally singular} with respect to the Hermitian form $B$ if $$Q(v)=0,Q(w) =0,B(v,w) = 0,
\ \forall v,w \in W.$$


The $\ell$--space $W$ is totally singular with respect to $J$ if and only if for all $x,y \in W$ we have that $$\sum\limits_{i=1}^{2\ell} x_iy_{2\ell+1-i}= 0 $$
\end{definition}

\begin{definition}
Let $V$ be a vector space of dimension $m$ over the field $\F$. Let $B$ be a symmetric form and let $Q$ be its associated quadric. The \emph{polar orthogonal Grassmannian } $\Oll$ is defined as the set of totally singular spaces of $\Glm.$ That is $$\Oll := \{W \in \Glm \ : \ B(v,w) = 0, Q(v) = 0 \forall v,w \in W  \}.$$
\end{definition}



\section{Polar orthogonal Grassmann Code}

In this section we state the definition of polar orthogonal Grassmann code.

Recall that $det_{\X}$ represents the minor of the generic matrix $\X$ on the columns given by $I$. In this section we change the notation of the space of minors as follows.
\begin{definition}


We denote by $\Df$ the set of all $\F_{q}$--linear combinations of the $\ell$--minors of the generic $\ell \times m$ matrix $\X$, the functions $det_I(\X)$. That is $$\Df := \{ \sum c_I det_I(\X) \ : \ c_I \in \F_{q} \} $$
\end{definition}

\begin{definition}
Let $f\in \Df$, We define the \emph{support} of $f$ as the collection of all column sets whose minors appear in the expression of $f$ with a nonzero coefficient. That is

 $$supp(f) = \{ I \subseteq [m] \ \| \ c_I \neq 0, f = \sum c_I det_I(\X) \}.$$
\end{definition}

 Now we define an evaluation map from the linear combinations of determinants.

\begin{definition}

Let $L = \{M_1, M_2, \ldots, M_n\} \subseteq \Mts$ be a subset of matrices. Let $f \in \Df$. We define the evaluation map of $f$ onto $L$ as: $$ev_L(f) = (f(M_1), f(M_2), \ldots, f(M_n)).$$ 

\end{definition}









\begin{definition}
For each $W \in \Oll$ let $M_W$ denote a $\ell \times 2\ell$ matrix whose rowspace is $W$. Suppose $L$ is a collection of matrix representatives for each elements of $\Oll$ where each $W\in \Oll$ has a unique representative in $L$. The \emph{polar Orthogonal Grassmann code}, $\CO$, is defined as $$ Im(ev) = \{ ev_L(f) \ : \ f \in \Df \}. $$

\end{definition}







Since the Pl\"ucker embedding of $\Oll$ into the projective space $\Pp(\binom{2\ell}{\ell}-1,q)$ is does not map lines to lines calculating the minimum distance of the code $\CO$ is not equivalent to determining the maximum possible number of points $\Oll$ has in common with an algebraic hyperplane. Nonetheless, it is worthwhile to understand the Grassmannian embedding in this case, as it is still a pointwise embedding. We'll determe the minimum distance of the polar orthogonal Grassmann code for $\ell = 3$ by evaluating the set of all $3$--minors of a generic $3 \times 6$ matrix on the set of all matrix representatives of $\Ots$, restricting $f$ to special subsets of $\Ots$ and determining the weight of $f$ on each of these subsets.





\section{Automorphisms of $\Df$ in the orthogonal case}

Now we have to ensure that the column operations on $\X$ preserve the orthogonality constrain. For simplicity we shall use $B(x,y)$ to denote $$B(x,y) =  \sum\limits_{i=1}^{2\ell} x_iy_{2\ell+1-i}$$ and $Q(x)$ to denote the associated quadratic form $$Q(x) = \sum\limits_{i=1}^{\ell} x_ix_{2\ell+1-i} $$    




\begin{lemma}\label{lem:orthogonal1}

Let $x = (x_1, x_2, \ldots, x_{2\ell})$ and $y = (y_1, y_2, \ldots,y_{2\ell})$. Let $\sigma$ be the map obtained by applying the following two column operations $C_i + aC_j \rightarrow C_i$ and $C_{2\ell+1-j} -aC_{2\ell+1-i} \rightarrow C_{2\ell+1-j}$ where $i \neq 2\ell+1-j.$ Suppose that $B(x,y) = 0$ and $Q(x)  = 0$. Then $$B(\sigma(x), \sigma(y)) = 0, Q(\sigma(x)) = 0. $$
\end{lemma}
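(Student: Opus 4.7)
My plan is to prove this lemma by a direct coordinate-level calculation: track the (at most) two entries of a vector that $\sigma$ modifies, and verify that the induced changes in $B$ and in $Q$ sum to zero.

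First I would record how $\sigma$ acts on a single row. The operation $C_i + aC_j \to C_i$ replaces the $i$-th coordinate of a vector by itself plus $a$ times its $j$-th coordinate; the operation $C_{2\ell+1-j} - aC_{2\ell+1-i} \to C_{2\ell+1-j}$ replaces the $(2\ell+1-j)$-th coordinate by itself minus $a$ times its $(2\ell+1-i)$-th coordinate; every other coordinate is untouched. Setting $h := 2\ell+1-i$ and $k := 2\ell+1-j$, the hypothesis $i\neq 2\ell+1-j$ together with the implicit $i\neq j$ of a column operation forces the four indices $i,j,h,k$ to be pairwise distinct; in particular the pair $(i,h)$ of partners under the involution $t\mapsto 2\ell+1-t$ is disjoint from the pair $(j,k)$, and the two column operations commute because neither of them touches the coordinates referenced by the other.

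Next I would expand $B(\sigma(x),\sigma(y))=\sum_{t=1}^{2\ell}\sigma(x)_t\,\sigma(y)_{2\ell+1-t}$ and observe that only the four summands indexed by $t\in\{i,j,h,k\}$ can differ from the corresponding summands of $B(x,y)$, since all the other summands involve only untouched coordinates of both $x$ and $y$. A direct substitution of the explicit formulas shows that the net changes at $t = i,\,h,\,k,\,j$ are $+a x_j y_h$, $+a x_h y_j$, $-a x_h y_j$, $-a x_j y_h$ respectively; these four terms cancel, so $B(\sigma(x),\sigma(y))=B(x,y)=0$.

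The argument for the quadratic form is almost identical and slightly shorter. In $Q(x)=\sum_{t=1}^{\ell}x_t x_{2\ell+1-t}$ only the two summands carrying the pair $(i,h)$ and the pair $(j,k)$ are affected, and their perturbations are $+a x_j x_h$ and $-a x_j x_h$, which cancel, giving $Q(\sigma(x))=Q(x)=0$. I do not foresee a genuine obstacle: the computation is characteristic-free and goes through verbatim in characteristic $2$, where $Q$ is not recoverable from $B$ and so must be handled separately. The only point that requires care is the pairwise distinctness of $i,j,h,k$, and that is exactly what the hypothesis $i\neq 2\ell+1-j$ secures; without it some of the four ``change'' terms would coincide and could fail to cancel.
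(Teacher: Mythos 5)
Your proof is correct and takes essentially the same approach as the paper's: expand $B$ and $Q$ coordinatewise after the substitution and observe that the four (resp.\ two) perturbed summands cancel. Your write-up is slightly cleaner in that it names the partner indices $h=2\ell+1-i$, $k=2\ell+1-j$, tracks only the changed terms rather than re-deriving the whole sum, and explicitly derives the pairwise distinctness of $i,j,h,k$ from the hypothesis $i\neq 2\ell+1-j$, a point the paper leaves implicit.
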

\begin{proof}

Let $\sigma(x) = v = (v_1, v_2, \ldots, v_{2\ell})$ and $\sigma(y) =w = (w_1, w_2, \ldots, w_{2\ell})$.

$$B(v, w) = \sum\limits_{r=1}^{2\ell} v_rw_{2\ell+1-r}.$$
We write out the terms from the sum $B(v, w)$ with indices $i,j,2\ell+1-i, 2\ell+1-j$ and write

\begin{equation} \label{eq1}
\begin{split}
 B(v, w) & = v_{i}w_{2\ell+1-i} + v_{2\ell+1-i}w_{i}+ v_{j}w_{2\ell+1-j} + \\ 
 & v_{2\ell+1-j}w_{j} + \sum\limits_{r \neq i,j,2\ell+1-i,2\ell+1-j} v_rw_{2\ell+1-r}
\end{split}
\end{equation}

Writing everything in terms of $x_i$ and $y_i$ we obtain:

\begin{tabular}{rcl}
  $B(v, w)$   &  $=$ & \\
             &  &  $(x_i + ax_j)y_{2\ell+1-i} + x_{2\ell+1-i}(y_i + ay_j)$\\
             & $+$ & $x_{j}(y_{2\ell+1-j} - ay_{2\ell+1-i}) + (x_{2\ell+1-j} - ax_{2\ell+1-i})y_{j}$ \\
     & $+$ & $\sum\limits_{r \neq i,j,2\ell+1-i,2\ell+1-j} x_r y_{2\ell+1-r}.$
\end{tabular}

Expanding the sums we obtain:

\begin{tabular}{rcl}
  $B(v, w)$   &  $=$ & \\
             &  &  $x_iy_{2\ell+1-i} + ax_jy_{2\ell+1-i} + x_{2\ell+1-i}y_i + x_{2\ell+1-i}ay_j$\\
             & $+$ & $x_{j}y_{2\ell+1-j} -x_{j}ay_{2\ell+1-i} + x_{2\ell+1-j}y_j - ax_{2\ell+1-_i}y_{j}$ \\
     & $+$ & $\sum\limits_{r \neq i,j,2\ell+1-i,2\ell+1-j} x_r y_{2\ell+1-r}.$
\end{tabular}


We rearrange some terms as:

$$B(v, w) = ax_jy_{2\ell+1-i} + x_{2\ell+1-i}ay_j -x_{j}ay_{2\ell+1-i} - ax_{2\ell+1-_i}y_{j} + \sum\limits_{r =1}^{2\ell} x_r y_{2\ell+1-r}.$$

The terms $ax_jy_{2\ell+1-i} + x_{2\ell+1-i}ay_j -x_{j}ay_{2\ell+1-i} - ax_{2\ell+1-_i}y_{j}$ cancel and we obtain:

$$B(v, w) = B(x,y) = 0.$$
Now we consider the quadratic form $Q$.
$$Q(v) = \sum\limits_{r=1}^{\ell} v_rv_{2\ell+1-r}.$$
We write out the terms from $Q(v)$ with indices $i,j,2\ell+1-i, 2\ell+1-j$ and write

\begin{equation} \label{eq3}
\begin{split}
 Q(v) & = v_{i}v_{2\ell+1-i} + v_{2\ell+1-i}v_{i}+ v_{j}v_{2\ell+1-j} + \\ 
 & v_{2\ell+1-j}v_{j} + \sum\limits_{r \neq i,j,2\ell+1-i,2\ell+1-j} v_rv_{2\ell+1-r}
\end{split}
\end{equation}

Writing everything in terms of $x_i$ we obtain:

\begin{tabular}{rcl}
  $Q(v)$   &  $=$ & \\
             &  &  $(x_i + ax_j)x_{2\ell+1-i} + x_{2\ell+1-i}(x_i + ax_j)$\\
             & $+$ & $x_{j}(x_{2\ell+1-j} - ax_{2\ell+1-i}) + (x_{2\ell+1-j} - ax_{2\ell+1-i})x_{j}$ \\
     & $+$ & $\sum\limits_{r \neq i,j,2\ell+1-i,2\ell+1-j} x_r x_{2\ell+1-r}.$
\end{tabular}

Expanding the sums we obtain:

\begin{tabular}{rcl}
  $Q(v)$   &  $=$ & \\
             &  &  $x_ix_{2\ell+1-i} + ax_jx_{2\ell+1-i} + x_{2\ell+1-i}x_i + x_{2\ell+1-i}ax_j$\\
             & $+$ & $x_{j}x_{2\ell+1-j} -x_{j}ax_{2\ell+1-i} + x_{2\ell+1-j}x_j - ax_{2\ell+1-_i}x_{j}$ \\
     & $+$ & $\sum\limits_{r \neq i,j,2\ell+1-i,2\ell+1-j} x_r x_{2\ell+1-r}.$
\end{tabular}


We rearrange some terms as:

$$Q(v) = ax_jx_{2\ell+1-i} + x_{2\ell+1-i}ax_j -x_{j}ax_{2\ell+1-i} - ax_{2\ell+1-_i}y_{j} + \sum\limits_{r =1}^{2\ell} x_r y_{2\ell+1-r}.$$

The terms $ax_jx_{2\ell+1-i} + x_{2\ell+1-i}ax_j -x_{j}ax_{2\ell+1-i} - ax_{2\ell+1-_i}x_{j}$ cancel and we obtain:

$$Q(v) = Q(x) = 0.$$\end{proof}












We remark that any column permutation which permutes the first $\ell$ columns with in any way and the last $\ell$ columns in an analogous way will preserve singularity. Given $\eta \in S_\ell$, define i.e. $\sigma \in S_{2\ell}$ where $\sigma(i) = \eta(j)$ for both $1 \leq i,j \leq\ell $ and for $\ell +1 \leq i,j \leq 2\ell$ $\sigma(i) = 2\ell+1-\eta(2\ell+1-j)$.

 \section{Schubert cell partition of $\Oll$}
 
 \begin{lemma} Let $W\in \Oll$. Let $\ell \geq 2$. Let $A$ be an $\ell \times 2\ell$ matrix whose rowspace is $W$. Suppose $I$ is the set of $\ell$ pivots of $A$. For all $1 \leq i \leq 2\ell$, $I$ cannot contain both $i \in I$ and $2\ell+1-i \in I$.\end{lemma}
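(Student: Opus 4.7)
My plan is to argue by contradiction after reducing $A$ to a canonical form. Since the pivot set $I$ depends only on the row space $W$ and not on the particular spanning matrix, I may replace $A$ by its reduced row echelon form without changing $I$. So from here on I assume $A$ is in RREF, with the $\ell$ pivot columns $i_1 < i_2 < \cdots < i_\ell$, and I denote by $r_u$ the row whose pivot lies in column $i_u$.

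Suppose for contradiction that both $i$ and $2\ell+1-i$ belong to $I$. Since $i \neq 2\ell+1-i$, I may assume WLOG that $i < 2\ell+1-i$, i.e.\ $i \leq \ell$, and write $i = i_s$, $2\ell+1-i = i_t$ with $s<t$. The plan is to evaluate $B(r_s, r_t)$ and show it equals $1$; this contradicts total singularity of $W$, which forces $B(v,w) = 0$ for all $v,w \in W$.

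To compute $B(r_s,r_t) = \sum_{k=1}^{2\ell} (r_s)_k (r_t)_{2\ell+1-k}$ I analyze the terms by the location of $k$. The RREF conditions I intend to use are: (a) $(r_s)_k = 0$ for every $k < i$, since $i$ is the leading column of $r_s$; (b) $(r_t)_j = 0$ for every $j < 2\ell+1-i$, since $2\ell+1-i$ is the leading column of $r_t$; and (c) each pivot column is a standard basis vector, so $(r_s)_{i} = 1$, $(r_t)_{2\ell+1-i} = 1$, and $(r_s)_{2\ell+1-i} = (r_t)_i = 0$ because $i$ and $2\ell+1-i$ are both pivots but of different rows. Applied to the four ranges $k < i$, $k = i$, $i < k \leq 2\ell+1-i$, and $k > 2\ell+1-i$, conditions (a)--(c) kill every summand except the one at $k=i$, which contributes $1 \cdot 1 = 1$. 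Hence $B(r_s,r_t) = 1$, the desired contradiction.

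The only mildly delicate step is the middle range $i < k < 2\ell+1-i$: here $(r_s)_k$ may be nonzero, so I have to use (b) applied to the index $j = 2\ell+1-k$, noting that $i < k < 2\ell+1-i$ forces $j < 2\ell+1-i$ and so $(r_t)_j = 0$. Once this index bookkeeping is in place, the contradiction is immediate and no case distinction on the characteristic is needed, since the only term that survives is a single $1$.
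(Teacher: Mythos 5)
Your proof is correct and takes essentially the same approach as the paper's: argue by contradiction, take the two rows with pivots in columns $i$ and $2\ell+1-i$, and show that the bilinear form $B$ of these two rows equals $1$ because the echelon conditions kill every summand except the one aligning the two pivot positions. The only cosmetic difference is that you use the standard left-to-right RREF convention while the paper performs row reduction right-to-left (so its pivots are the rightmost nonzero entries), but the index bookkeeping and the resulting contradiction are the same in either convention.
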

\begin{proof}
Suppose that $A$ is a matrix which satisfies $rowspace(A) \in \Oll$.  By way of contradiction we shall assume that $A$ is an $\ell \times 2\ell$ matrix with $\ell \geq 2$, rank at least $2$ and two of its pivots in columns $x$ and $2\ell-x+1.$ Let $r_a=(a_1,a_2,...a_{2\ell})$ be the row with pivot in column $x$ and $r_b=(b_1,b_2,...b_{2\ell})$ be the row with pivot in column $2\ell-x+1.$ The product $H(r_b, r_a)$  is given by 
$$B(r_b, r_a) = \sum_{i=1}^{2\ell} b_{m-i+1}{a_i}.$$

The conditions on the rows $r_a$, $r_b$ and the pivots of $A$ force $a_i=0$ for all $i>x$ and $b_{2\ell-i+1}=0$ for all $2\ell-i+1>2\ell-x+1.$ When $i>x,$ $a_i=0$ so $b_{2\ell-i+1}{a_i}=0.$ When $i<x,$ $2\ell-i+1>2\ell-x+1.$ Then $b_{2\ell-i+1}=0$ and $b_{2\ell-i+1}{a_i}=0.$ When $i=x$ both $a_i$ and $b_{2\ell-i+1}$ are pivots so $a_i={a_i}=b_{2\ell-i+1}=1$ and $b_{2\ell-i+1}{a_i}=1.$ Therefore each term of the sum is equal to $0$ except when $i=x$ and then the term equals $1.$

Thus we have $$B(r_b, r_a) = \sum_{i=1}^{2\ell} b_{2\ell-i+1}{a_i} = b_{2\ell-x+1}a_x=1 \not= 0$$ and the space represented by $A$ is not singular. \end{proof}

The only possible pivot sets of a singular space are the sets $123$, $124$, $135$, $145$, $236$, $246$, $356$ and $456$. If we consider pivot sets when reducing rows right--to--left (instead of left--to--right) we still end up with the same possible pivot sets. 

One advantage to using the symmetric form given by $J$ and the quadric $x_1x_6+x_2x_5+x_3x_4$ is that we may partition the matrices in $L$ representing the totally isotropic spaces of $\Ots$ into the following sets of matrix representatives.

\begin{itemize}
\item $P_{456}= \Bigg\{ \begin{bmatrix}
0 & a_2 & a_3 & 0 & 0 & 1\\
-a_2 & 0 & a_5 & 0 & 1 & 0\\
-a_3 & -a_5 & 0 & 1 & 0 & 0\\
\end{bmatrix} \ : \ a_2,a_3, a_5 \in \F_{q} \Bigg\}  $

\item $P_{356}= \Bigg\{  \begin{bmatrix}
0 & b_2 & 0 & b_3 & 0 & 1\\
-b_2 & 0 & 0 & b_5 & 1 & 0\\
-b_3 & -b_5 & 1 & 0 & 0 & 0\\
\end{bmatrix} \ : \ b_2, b_3, b_5 \in \F_{q} \Bigg\} $


\item $P_{246}= \Bigg\{ \begin{bmatrix}
0 & 0 & c_2 & 0 & c_3 & 1\\
-c_2 & 0 & 0 & 1 & 0 & 0\\
-c_3 & 1 & 0 & 0 & 0 & 0\\
\end{bmatrix} : c_2, c_3 \in \F_{q} \Bigg\}  $


\item $P_{236}= \Bigg\{  \begin{bmatrix}
0 & 0 & 0 & d_2 & d_3 & 1\\
-d_2 & 0 & 1 & 0 & 0 & 0\\
-d_3 & 1 & 0 & 0 & 0 & 0\\
\end{bmatrix} \ : \ d_2, d_3 \in \F_{q} \Bigg\}$


\item $P_{145}=\Bigg\{  \begin{bmatrix}
0 & 0 & e_2 & 0 & 1 & 0\\
0 & -e_2 & 0 & 1 & 0 & 0\\
1 & 0 & 0 & 0 & 0 & 0\\
\end{bmatrix} \ : \ e_2 \in \F_{q} \Bigg\}$


\item $P_{135}= \Bigg\{ \begin{bmatrix}
0 & 0 & 0 & x_2 & 1 & 0\\
0 & -x_2 & 1 & 0 & 0 & 0\\
1 & 0 & 0 & 0 & 0 & 0\\
\end{bmatrix} \ : \ x_2 \in \F_{q} \Bigg\}$ 


\item $P_{124}=\Bigg\{ \begin{bmatrix}
0 & 0 & 0 & 1 & 0 & 0\\
0 & 1 & 0 & 0 & 0 & 0\\
1 & 0 & 0 & 0 & 0 & 0\\
\end{bmatrix} \Bigg\}$


\item $P_{123}= \Bigg\{ \begin{bmatrix}
0 & 0 & 1 & 0 & 0 & 0\\
0 & 1 & 0 & 0 & 0 & 0\\
1 & 0 & 0 & 0 & 0 & 0\\
\end{bmatrix} \Bigg\}$
\end{itemize}

These are the possible patterns of the matrices when performing row reduction from right to left. We have chosen this representation because the submatrix in the non pivot positions is a skew--symmetric matrix with zeroes in the diagonal.  Evaluating the functions $f \in \Dfts$ on the matrices in each $P_I$ is the same as evaluating all linear combinations of minors of $3 \times 3$ skew--symmetric matrices which may have some special zero pattern.

\begin{lemma}
Let $I$ be a pivot set such that $4 \in I$. Let $I' = I \cup \{3\} \setminus \{4\}$. Then the matrix representatives in $P_{I'}$ may be obtained by swapping columns $3$ and $4$ from the matrix representatives in $P_I$.
\end{lemma}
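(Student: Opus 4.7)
The plan is to proceed in two stages: first verify that the column swap $C_3 \leftrightarrow C_4$ sends totally singular spaces to totally singular spaces, and then check case-by-case that it sends the canonical representatives in $P_I$ to the canonical representatives in $P_{I'}$.

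For the first stage, notice that for $\ell = 3$ the columns $3$ and $4$ are paired under the form $J$, since $4 = 2\ell + 1 - 3$. Because the bilinear form $B(x,y) = \sum_{i=1}^{6} x_i y_{7-i}$ contains the indices $3$ and $4$ only through the symmetric combination $x_3 y_4 + x_4 y_3$, and the quadratic form $Q(x) = \sum_{i=1}^{3} x_i x_{7-i}$ contains them only through $x_3 x_4$, simultaneously swapping $x_3 \leftrightarrow x_4$ and $y_3 \leftrightarrow y_4$ leaves both $B$ and $Q$ unchanged. (Alternatively, one can realize the swap as the composition of a suitable sequence of admissible column operations from Lemma \ref{lem:orthogonal1}, but the direct invariance argument is cleaner.) Consequently, if $\mathrm{rowspace}(A) \in \Ots$, then the rowspace of the matrix obtained from $A$ by swapping columns $3$ and $4$ is also in $\Ots$.

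For the second stage, recall from the previous lemma that a pivot set of a singular space cannot contain both $i$ and $2\ell + 1 - i$. Since $4 \in I$ forces $3 \notin I$, after swapping columns $3$ and $4$ the pivot at column $4$ moves to column $3$ and the pivot set becomes exactly $I' = (I \setminus \{4\}) \cup \{3\}$. Because each totally singular $\ell$-space has a unique reduced echelon representative with a prescribed pivot pattern, the swapped matrix must be the canonical representative in $P_{I'}$.

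It remains to verify that the parametric structure matches in each of the four cases $I \in \{124, 145, 246, 456\}$. This is routine inspection: taking any $M \in P_I$ and swapping its third and fourth columns, one checks that the resulting matrix has exactly the zero pattern and skew--symmetric parametrization required to lie in $P_{I'}$, with the parameters $a_i$, $c_i$, $e_i$ renamed as $b_i$, $d_i$, $x_i$ respectively. The main (and only) obstacle is bookkeeping on these four explicit cases; there is no conceptual difficulty beyond the observation that the pairing $3 = 2\ell + 1 - 4$ is what makes the column swap a symmetry of the polar structure.
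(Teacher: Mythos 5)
The paper states this lemma without giving a proof, so there is no printed argument to compare against. Your proposal is correct and well organized; the one conceptual point worth surfacing explicitly is the step in your second stage where you assert that ``the pivot at column $4$ moves to column $3$.'' This is not automatic for an arbitrary column swap of a matrix in right-to-left reduced echelon form: if the row $r_0$ having its pivot at column $4$ had a nonzero entry in column $3$, then after the swap that row's rightmost nonzero entry would still sit in column $4$, not column $3$, and the swapped matrix would not be in echelon form. What saves you is total singularity: row $r_0$ has the shape $(\ast,\ast,\ast,1,0,0)$, so $Q(r_0) = r_{0,1}\cdot 0 + r_{0,2}\cdot 0 + r_{0,3}\cdot 1 = r_{0,3}$, and the condition $Q(r_0)=0$ forces $r_{0,3}=0$. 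This is precisely the ``zero diagonal of the skew--symmetric block'' that you observe empirically in the four cases of your final paragraph; stating it once via the quadratic form makes the stage-two argument self-contained rather than deferred to inspection. With that observation added, your two abstract stages already prove the lemma and the case-by-case check becomes a sanity check rather than a load-bearing step.

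Two minor remarks. First, the parenthetical suggestion that the swap could be realized by a sequence of operations from Lemma~\ref{lem:orthogonal1} does not go through directly: that lemma explicitly excludes $i = 2\ell+1-j$, which is exactly the case $i=3$, $j=4$; the subsequent remark in the paper about column permutations covers only those that respect the first-half/second-half split, which a $3\leftrightarrow 4$ swap does not. Your direct invariance computation for $B$ and $Q$ is therefore not merely ``cleaner'' but in fact necessary. Second, the four cases are indeed exhausted by $I \in \{124,145,246,456\}$, i.e.\ every admissible pivot set containing $4$, so the enumeration is complete.
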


\section{Properties of minors on $\Ots$}

In this section we prove certain properties of functions of the form $det_{A}(X)$ evaluated on the different partition sets of $\Ots.$

\begin{definition}

Given $I \subseteq [2\ell]$ define $$I^* := \{ 2\ell+1 - i' \ : \ i \in [2\ell], i \not \in I \}.$$
\end{definition}
Our aim is to relate the functions $det_{A^*}(\X)$ and $det_{A}(\X)$ when evaluated on each set $P_I$.

\begin{definition}

Let $I$ be a set of columns satisfying $I = I^*$. Let $A$ be any set of $\ell$ columns. Define the sets $P,N,r_{A,I}, c_{A,I}$ as follows:
\begin{itemize}
    \item  $I := \{ i_1 < i_2 < \ldots < i_\ell \}$
    \item  $N  := [2\ell] \setminus I = \{ n_1 < n_2 < \ldots n_\ell \}$
    \item $r_{A,I} := \{ r \ : \ i_{\ell+1-r} \in I \setminus A \} $
    \item $c_{A,I} := \{ s \  : \ n_s \in N \cap A \}$
\end{itemize}

\end{definition}

Now we relate the $3 \times 3$ minors of the $ 3 \times 6$ matrices in $P_I$ with the minors of the $3 \times 3$ nonidentity submatrix. (i.e. the submatrix whose columns are not in $I$)

\begin{lemma}
Suppose $I \subseteq [2\ell]$ such that $I^* = I$. Let $P_I$ be the set of matrices with pivot set $I$. Let $A \subseteq [m]$. Let $M$ be a $\ell \times m$ matrix such that $M \in P_I$. Evaluating the minor $det_{A}(M)$ on $P_I$ is equivalent to evaluating $ (-1)^{\sum_{i_r=a_j \in A} \ell -r+j+1} det_{r_{A,I}, c_{A,I}}(M_{I^C})$.

\end{lemma}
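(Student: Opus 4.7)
My plan is to apply the generalized Laplace expansion to $\det_A(M)$ along the columns of $M_A$ that correspond to pivot columns of $M$. The essential observation is that for $M \in P_I$, each column of $M$ indexed by some $i_r \in I$ equals the standard basis vector $e_{\ell+1-r}$. Consequently, inside the $\ell \times \ell$ submatrix $M_A$, the columns whose positions in $A$ lie in $C := \{j : a_j \in I\}$ contain a single $1$ and zeros elsewhere, while the columns at positions in $[\ell] \setminus C$ are precisely those columns of the skew-symmetric block $M_{I^C}$ corresponding to indices $a_j \in A \cap N$.

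I would then apply generalized Laplace expansion to $\det(M_A)$ along the columns indexed by $C$. Because each such column is a standard basis vector, the expansion collapses to a single nonzero term: the only $|C|$-subset of rows $R$ for which $(M_A)_{R, C}$ has nonzero determinant is $R^{\ast} := \{\ell + 1 - r : i_r \in A \cap I\}$. The complementary row set $[\ell] \setminus R^{\ast}$ equals $\{\ell + 1 - r : i_r \in I \setminus A\} = r_{A, I}$, and the complementary column set $[\ell] \setminus C$, when re-indexed with respect to the ordering $N = \{n_1 < \cdots < n_\ell\}$ of the columns of $M_{I^C}$, becomes exactly $c_{A, I}$. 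Putting these identifications together yields $\det_A(M) = \varepsilon \cdot \det_{r_{A, I}, c_{A, I}}(M_{I^C})$ for some sign $\varepsilon$.

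The main obstacle is computing $\varepsilon$ and matching it to the stated exponent. The generalized Laplace sign contributes $(-1)^{\sum R^{\ast} + \sum C}$, multiplied by $\det((M_A)_{R^{\ast}, C})$, which is the determinant of an anti-diagonal permutation matrix whose order is $|A \cap I|$. Rewriting the two index sums in terms of the pairs $(r, j)$ with $i_r = a_j \in A$, one has $\sum C = \sum j$ and $\sum R^{\ast} = \sum (\ell + 1 - r)$, producing the exponent $\sum (\ell - r + j + 1)$ appearing in the statement. The anti-diagonal factor must then be reconciled with this count via the sign of the corresponding permutation; this combinatorial bookkeeping is where I would expect to spend most of the effort, but once the two contributions are matched the lemma follows directly.
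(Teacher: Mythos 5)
Your generalized Laplace argument is essentially the same computation as the paper's (which performs iterated single-column cofactor expansions along the pivot columns), and your structural identifications of the surviving row set, column set, and the re-indexing into $M_{I^C}$ are all correct.

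However, the issue you flagged but did not resolve --- the determinant of the anti-diagonal block $(M_A)_{R^*,C}$ --- is a genuine one, and it does \emph{not} reconcile with the exponent in the statement. With $k := |A\cap I|$, the block $(M_A)_{R^*,C}$ is the $k\times k$ anti-diagonal permutation matrix, whose determinant is $(-1)^{k(k-1)/2}$. Meanwhile the generalized Laplace sign $(-1)^{\sum R^* + \sum C}$ already accounts for the \emph{entire} quantity $(-1)^{\sum_{i_r = a_j \in A}(\ell - r + j + 1)}$, since $\sum R^* = \sum(\ell+1-r)$ and $\sum C = \sum j$. So the true formula is
$$\det_A(M) = (-1)^{\sum_{i_r=a_j\in A}(\ell-r+j+1)}\,(-1)^{k(k-1)/2}\,\det_{r_{A,I},c_{A,I}}(M_{I^C}),$$
and the stated lemma is off by $(-1)^{k(k-1)/2}$. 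A concrete counterexample to the printed formula: take $\ell=3$, $I=A=\{4,5,6\}$. Then the formula's sign exponent is $4+4+4=12$, giving $+1\cdot\det(\text{empty})=+1$, but columns $4,5,6$ of any $M\in P_{456}$ form the $3\times3$ anti-diagonal matrix with determinant $-1$. The paper's own proof silently drops this factor: it accumulates a sign $(-1)^{\ell-r+j+1}$ for each pivot column \emph{as if} the row and column indices stayed fixed across successive expansions, but after deleting a column the positions of the remaining pivot columns shift left, and that cumulative shift is exactly what produces $(-1)^{k(k-1)/2}$.

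So you should not expect the bookkeeping to ``match''; the correct conclusion of your argument is that the lemma needs the extra factor $(-1)^{k(k-1)/2}$. This discrepancy is harmless for the paper's subsequent use (the key corollary is stated only for $q$ even, where signs vanish, and the odd-characteristic weight computations are done by direct evaluation), but you should state the corrected sign rather than assert agreement with the printed one.
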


\begin{proof}
Let $I = \{i_1, i_2, \ldots, i_\ell\}$. Note that these columns are of the form:
    $$m_{r,i_j} = \begin{cases}
         1 & r = \ell+1-j\\
         0 & else\\
    \end{cases}$$
Hence, expanding $det_A(M)$ along $r \in P\cap A$, we get $$det_A(M) = (-1)^{\ell-r+1+j}det(M^{\{[\ell]- \{\ell+1-i\} \}, \{A-\{I\} \}}).$$

The determinant $det_{A}$ is specified by the columns not in common between $I$ and $A$. Note that when removing the columns in $I$, the order is preserved for the other columns. Thus we may obtain the columns in $M_{I^C}$ by taking the subscripts of the ordered set $N = I^C = \{n_1, n_2,\ldots, n_{\ell}\}$. 

For the rows, note that each column $i_r \in I$ removes the row $\ell+1-i$ from consideration within the evaluation of the minor. This implies that the rows remaining in the evaluation are those rows in $I$ not evaluated in $A$. Thus the rows evaluated are $\{r : i_{\ell+1-r}\in I\setminus A \}.$ 

Expanding along the columns in $I \cap A$, we obtain that $det_A(M)$ is given by the columns in 
$\{ s \  : \ n_s \in N \cap A \}$ and the rows in $ \{ r \ : \ i_{\ell+1-r} \in I \setminus A \}$. Therefore  $$ det_A(M) = (-1)^{\sum_{i_r=a_j \in A} \ell -r+j+1} det_{r_{A,I}, c_{A,I}}(M_{I^C}).$$ \end{proof}

\begin{definition}
We say a minor $A$ is \emph{principal} along a pivot set $I$ if $r_{A,I} = c_{A,I}$. Consequently, if $r_{A,I} \neq c_{A,I}$
then the minor is \emph{non-principal}.
\end{definition}

\begin{lemma}\label{lem:determinanttranspose}
Let $A \subseteq [2\ell]$ where $\# A  = \ell$. Let $B = A^C$. Let $I = \{ i_1 < i_2 < \ldots i_\ell \}$
and $N = I^C = \{ n_1 < n_2 < \ldots n_\ell \}$ where $I = N^*$. Then $r_{A,I} = c_{B,I}.$
\end{lemma}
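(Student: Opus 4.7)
The plan is to prove the set equality $r_{A,I} = c_{B,I}$ by unwinding both sides directly from their definitions, and then invoking the hypothesis $I = N^*$ as the bridge between the pivot indices and the non-pivot indices.

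First, I would simplify $c_{B,I}$ using $B = A^C$. Since $n_s$ lies in $N$ for every $s \in [\ell]$, and $N \cap B = N \setminus A$, the definition collapses to
\[
c_{B,I} = \{\, s \in [\ell] : n_s \notin A \,\}.
\]
Similarly, because $i_{\ell+1-r}$ always lies in $I$, the condition $i_{\ell+1-r} \in I \setminus A$ appearing in the definition of $r_{A,I}$ reduces to $i_{\ell+1-r} \notin A$, and so
\[
r_{A,I} = \{\, r \in [\ell] : i_{\ell+1-r} \notin A \,\}.
\]

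Next, I would use the hypothesis $I = N^*$ to convert the pivot index $i_{\ell+1-r}$ into an expression in $n_r$. Unwinding the definition of $(\cdot)^*$, this hypothesis forces $I = \{\, 2\ell+1-n : n \in N \,\}$ as a set. A short sorting argument, using that the involution $k \mapsto 2\ell+1-k$ reverses the natural ordering of $[2\ell]$, then extracts the explicit index identity $i_{\ell+1-r} = 2\ell+1-n_r$ for every $r \in [\ell]$: the largest element of $N$ maps to the smallest element of $I$, the second largest of $N$ to the second smallest of $I$, and so on. Substituting into the expression above produces $r_{A,I} = \{\, r \in [\ell] : 2\ell+1-n_r \notin A \,\}$.

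The final step is to match this rewritten form of $r_{A,I}$ with $c_{B,I}$. Under the reflection pairing $n_r \leftrightarrow 2\ell+1-n_r$ between $N$ and $I$ induced by $I = N^*$, the two defining conditions $2\ell+1-n_r \notin A$ and $n_r \notin A$ single out the same subset of indices $r \in [\ell]$, giving $r_{A,I} = c_{B,I}$. The main obstacle will be carrying out this last identification cleanly; it rests on using the orthogonal structure built into the hypothesis $I = N^*$ to argue that membership in $A$ commutes with the reflection restricted to the relevant indices. Once this step is pinned down rigorously, the remainder of the proof is a routine chain of equivalences from the defining set comprehensions for $r_{A,I}$ and $c_{B,I}$.
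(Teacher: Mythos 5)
Your strategy --- unwind $r_{A,I}$ and $c_{B,I}$ into conditions on membership in $A$, then convert the pivot indices $i_{\ell+1-r}$ into reflected non-pivot indices $2\ell+1-n_r$ --- is the right one and matches the paper's own intended argument. The reductions $c_{B,I}=\{s : n_s\notin A\}$ and $r_{A,I}=\{r : i_{\ell+1-r}\notin A\}=\{r : 2\ell+1-n_r\notin A\}$ are correct. (One caveat: under the paper's definition $X^* = \{2\ell+1-x : x\notin X\}$, the identity $i_{\ell+1-r}=2\ell+1-n_r$ comes from the pivot-set condition $I=I^*$, i.e.\ $I=\{2\ell+1-n : n\in N\}$; you in effect read the hypothesis $I = N^*$ as that statement, which silently repairs a misprint in the lemma's hypothesis rather than literally applying the stated definition.)

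The gap you flag at the end, however, cannot be closed, because the lemma as printed is false. For a general half-size $A$ the reflection $n_r\mapsto 2\ell+1-n_r$ does not respect membership in $A$, so $\{r:2\ell+1-n_r\notin A\}$ and $\{r:n_r\notin A\}$ are genuinely different sets. For instance with $\ell=3$, $I=\{4,5,6\}$, $N=\{1,2,3\}$, $A=\{1,2,5\}$ one computes $r_{A,I}=\{1,3\}$ but $c_{A^C,I}=\{3\}$. The intended statement takes $B=A^*=\{2\ell+1-a:a\notin A\}$, not $B=A^C$. With $B=A^*$, the condition $n_s\in B$ becomes $2\ell+1-n_s\notin A$, so $c_{B,I}=\{s:2\ell+1-n_s\notin A\}$, which is exactly the expression you already derived for $r_{A,I}$ --- the problematic last step disappears. (That corrected form, $r_{A,I}=c_{A^*,I}$, is also precisely what the subsequent corollary invokes when it transposes the minor on the skew-symmetric block.) Once $A^C$ is replaced by $A^*$, your unwinding argument is complete and needs no further bridging claim.
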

\begin{proof}
 
Suppose that $s \in c_{B,I}.$ By definition $n_s \in N \cap I$. By the definition of $N$ and $B$ we have that $n_s \in N^* \cap A^*$. This implies that $m+1-n_s \in I \cap A$. From the definition of $N$ and the fact that $N = I^*$ we have that $i_{\ell+1-s} \in I \cap A$. Therefore $s \in r_{A,I}$ \end{proof}

\begin{corollary}\label{coll: submat conj} Let $q$ be even and $P_I$ be the set of matrix representatives with pivot set $I$ in $\Oll$. Then the evaluation of $det_{A^*}(\X)$ on $P_I$ is equal to $det_{A}(\X)$. 
\end{corollary}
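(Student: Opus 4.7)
The plan is to use the explicit formula from the preceding lemma, writing $\det_A(M)$ evaluated on $P_I$ as (up to sign) the determinant of an appropriate submatrix of $M_{I^C}$, and then exploit two key facts: (i) every pivot set $I$ that actually arises in $\Oll$ satisfies $I = I^*$, and (ii) in characteristic two the skew--symmetric block $M_{I^C}$ is symmetric with zero diagonal, so $\det(M_{I^C}[R,C]) = \det(M_{I^C}[C,R])$ for any equally sized index sets $R, C$.

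First I would observe that every admissible pivot set $I$ satisfies $I = I^*$. Indeed $|I| = \ell$ and the earlier pivot lemma forbids $I$ from containing both $i$ and $2\ell+1-i$, so $I$ picks exactly one element from each of the $\ell$ pairs $\{i, 2\ell+1-i\}$, which immediately forces $I^* = I$. The hypothesis of the previous lemma is then satisfied and it yields
\begin{equation*}
  \det_A(M) = \pm \det_{r_{A,I},\, c_{A,I}}(M_{I^C}), \qquad \det_{A^*}(M) = \pm \det_{r_{A^*,I},\, c_{A^*,I}}(M_{I^C}),
\end{equation*}
where the signs are irrelevant because $q$ is even.

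Next I would establish the index identities $r_{A^*,I} = c_{A,I}$ and $c_{A^*,I} = r_{A,I}$. Since $I = I^*$, the involution $j \mapsto 2\ell+1-j$ carries $I$ bijectively onto $N = I^C$ in reverse order, so $n_s = 2\ell+1-i_{\ell+1-s}$. Substituting this into the definition of $A^* = \{2\ell+1-a : a \in A^C\}$, a direct unpacking shows that $i_{\ell+1-r} \in I \setminus A^*$ is equivalent to $n_r \in N \cap A$, and symmetrically that $n_s \in N \cap A^*$ is equivalent to $i_{\ell+1-s} \in I \setminus A$. These are precisely the asserted set equalities, and they amount to saying that passing from $A$ to $A^*$ swaps the row and column index sets of the submatrix of $M_{I^C}$ produced by the lemma.

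Finally, in characteristic two the skew-symmetric matrix $M_{I^C}$ with zero diagonal satisfies $M_{I^C}^{T} = M_{I^C}$, so for any equally sized $R, C$ we have $\det(M_{I^C}[R,C]) = \det((M_{I^C}[R,C])^{T}) = \det(M_{I^C}[C,R])$. Combining this with the index swap yields $\det_{r_{A^*,I},\, c_{A^*,I}}(M_{I^C}) = \det_{r_{A,I},\, c_{A,I}}(M_{I^C})$, and together with the first step this gives $\det_{A^*}(\X) = \det_A(\X)$ on $P_I$. The main obstacle I anticipate is purely notational: keeping straight the interaction between the $*$-operation (reflect-the-complement), ordinary complementation, and the two enumerations of $I$ and $N$, where it is easy to be off by one or to drop a reflection. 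Once that bookkeeping is nailed down, the symmetry argument in the final step is essentially immediate.
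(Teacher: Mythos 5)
Your proposal is correct and follows essentially the same route as the paper: expand $\det_A$ along the pivot columns to get a signed minor of $M_{I^C}$, observe that passing from $A$ to $A^*$ swaps the resulting row and column index sets (this is the content of Lemma~\ref{lem:determinanttranspose}), and then use the symmetry of the skew--symmetric block in characteristic two to drop the transpose. The only cosmetic difference is that you unpack the index identities directly rather than citing the lemma, and you explicitly note that $I=I^*$ holds for every admissible pivot set, a fact the paper uses implicitly.
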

\begin{proof}
Let $I$ be a pivot set of $\Oll$. Let $M(P_I)$ be the set of matrices in $P_I$ whose columns are restricted to those columns not in $I$. Then $ev_{P_I}(det_A(\X)) = det_{r_{A,P}, c_{A,P}}(M(P_I))$. Lemma \ref{lem:determinanttranspose} implies $ev_{P_I}(det_{A^*}(\X)) = det_{{c_{A,P}, r_{A,P}}}(M(P_I))$. As the matrix $M(P_I)$ is a skew--symmetric matrix, the determinants satisfy  $det_{c_{A,I}, r_{A,I}}(M(P_I)) = (-1)^{\# c_A} det_{r_{A,I}, c_{A,I}}(M(P_I))$. Since we are in even charcterisstic and $\# c_A = \# (A \setminus I)$, the statement follows. \end{proof}

We end this section proving we may assume certain minors appear in the support of $f$ without loss of generality.

\begin{lemma}\label{lem:principal123}
Let $f \in \Dfts$. If there is a principal minor in the support of $f$, then there exists $g \in \Dfts$ such that $123 \in supp(g)$ and $wt(f) = wt(g)$.

\end{lemma}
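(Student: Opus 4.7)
The plan is to produce $g$ from $f$ by applying an orthogonal column permutation of the generic matrix $\X$: such a permutation induces a coordinate permutation on $\CO$, which preserves weight, and can be arranged to send a principal minor in $supp(f)$ to the minor $123$.

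First I would argue that a minor $A$ is principal along a pivot set $I$ of $\Oll$ (all of which satisfy $I^{*}=I$ by the Schubert analysis in Section~5) if and only if $A^{*}=A$. Since $I^{*}=I$, the involution $j\mapsto 2\ell+1-j$ restricts to a bijection $I\leftrightarrow N$ identifying $i_{\ell+1-r}$ with $n_r$. Rewriting the definitions, $r_{A,I}=\{r:i_{\ell+1-r}\notin A\}$ and $c_{A,I}=\{s:n_s\in A\}$; under the pairing, $r_{A,I}=c_{A,I}$ is equivalent to the condition ``$i_{\ell+1-r}\notin A \Leftrightarrow n_r\in A$'' for every $r$, which is precisely $A^{*}=A$. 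Hence ``principal'' is an intrinsic property of $A$ independent of $I$, and for $\ell=3$ the principal minors are exactly the eight sets $\{123,124,135,145,236,246,356,456\}$, which happen to coincide with the possible pivot sets of $\Ots$.

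Next I would identify a group $H$ of orthogonal column permutations acting transitively on these eight principal minors. Let $H$ be generated by the $S_3$-action simultaneously permuting the pairs $\{1,6\},\{2,5\},\{3,4\}$ (from the remark following Lemma~\ref{lem:orthogonal1}) together with the three transpositions $(1,6),(2,5),(3,4)$. Each generator fixes $Q(x)=x_1x_6+x_2x_5+x_3x_4$ and the bilinear form $B$; for the swaps this is a direct check, since the term $x_i x_{2\ell+1-i}$ is symmetric in its two factors. The group $H\cong \mathbb{Z}/2\mathbb{Z}\wr S_3$ acts on the eight principal minors via the bijection $A\leftrightarrow \{0,1\}^3$ recording which element of each pair lies in $A$; under this identification $H$ acts as the full symmetry group of the $3$-cube, and therefore transitively.

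Given a principal $A\in supp(f)$, I would choose $\sigma\in H$ with $\sigma(A)=123$ and define $g$ by the substitution $\X\mapsto \X\sigma^{-1}$ in the expression for $f$. This sends $det_B(\X)$ to $\pm det_{\sigma(B)}(\X)$, so $det_{123}$ appears in $g$ with coefficient $\pm c_A\neq 0$, placing $123\in supp(g)$. Because $\sigma$ preserves $\Ots$ bijectively, it permutes the coordinates of $\CO$ and hence $wt(g)=wt(f)$. The main obstacle will be verifying that the column transpositions $(i,2\ell+1-i)$ are orthogonal automorphisms of $\Ots$ — they are not among the permutations singled out in the remark following Lemma~\ref{lem:orthogonal1}, so this needs a short separate justification — and then assembling the transitivity statement cleanly from the chosen generators of $H$.
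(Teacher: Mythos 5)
Your proof is correct and takes essentially the same approach as the paper: apply a column permutation that preserves the orthogonal form to carry a principal minor in $supp(f)$ onto $123$. The paper simply lists the needed permutations explicitly case by case (all of them products of the transpositions $(1,6),(2,5),(3,4)$, which already act simply transitively on the eight principal minors), whereas you package the same permutations as a transitive action of the hyperoctahedral group — the $S_3$ factor permuting the pairs is superfluous for transitivity, and the characterization ``$A$ principal $\Leftrightarrow A^*=A$'' is a nice observation but not strictly needed for the lemma.
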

\begin{proof}
If $124 \in supp(f)$ then $g = f(X_1 \| X_2 \| X_4 \| X_3 \| X_5 \| X_6)$ satisfies $123 \in supp(g)$ and $wt(g) = wt(f)$. The sets $135, 236 \in supp(f)$ are similar.

If $145 \in supp(f)$ then $g = f(X_1 \| X_5 \| X_4 \| X_3 \| X_2 \| X_6)$ satisfies $123 \in supp(g)$ and $wt(g) = wt(f)$. The sets $246$,$356$ are similar.

If $456 \in supp(f)$ then $g = f(X_6 \| X_5 \| X_4 \| X_3 \| X_2 \| X_1)$ satisfies $123 \in supp(g)$ and $wt(g) = wt(f)$. 

\end{proof}

\begin{lemma}\label{lem:principal125}
Let $f \in \Dfts$. If there is a nonprincipal minor in the support of $f$, then there exists $g \in \Dfts$ such that $125 \in supp(g)$ and $wt(f) = wt(g)$.

\end{lemma}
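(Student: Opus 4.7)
The strategy mirrors Lemma \ref{lem:principal123} exactly. For each nonprincipal $3$-subset $A \subseteq [6]$ that can appear in $supp(f)$, I plan to exhibit an orthogonality-preserving column permutation $\pi \in S_6$ with $\pi(A) = \{1,2,5\}$, and then set $g(X) := f(X_{\pi(1)} \| X_{\pi(2)} \| \cdots \| X_{\pi(6)})$. Since $\pi$ permutes $\Ots$ bijectively, the resulting $g$ lies in $\CO$ with $wt(g) = wt(f)$, and at the level of supports one has $supp(g) = \pi^{-1}(supp(f))$, so $\{1,2,5\} \in supp(g)$ as required.

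The admissible permutations are exactly those $\pi \in S_6$ that permute the three pairs $\{1,6\}, \{2,5\}, \{3,4\}$ among themselves. A direct check shows that both $B(x,y) = \sum_{i=1}^{6} x_i y_{7-i}$ and $Q(x) = \sum_{i=1}^{3} x_i x_{7-i}$ are invariant under any such $\pi$, and all the permutations already used implicitly in Lemma \ref{lem:principal123} --- namely, the $S_3$ action on $\{1,2,3\}$ paired with its mirror on $\{4,5,6\}$ from the remark after Lemma \ref{lem:orthogonal1}, the full column reversal $\tau: i \mapsto 7-i$, and the pair-internal transposition $(3\,4)$ used for the $\{1,2,4\}$ case --- lie in this subgroup.

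Next I would verify that the $12$-element subgroup $\langle S_3, \tau\rangle$ already acts transitively on the $12$ nonprincipal minors, which makes the enumeration finite and forced. Applying the six $S_3$-type permutations to $\{1,2,5\}$ produces the subsets $\{1,2,5\}, \{1,2,6\}, \{1,3,4\}, \{1,3,6\}, \{2,3,4\}, \{2,3,5\}$; composing each with $\tau$ then yields the remaining six nonprincipal minors $\{2,5,6\}, \{1,5,6\}, \{3,4,6\}, \{1,4,6\}, \{3,4,5\}, \{2,4,5\}$. Writing out each case gives an explicit substitution in the style of Lemma \ref{lem:principal123}; for instance, $\{1,3,4\} \in supp(f)$ yields $g = f(X_1 \| X_3 \| X_2 \| X_5 \| X_4 \| X_6)$ via $\pi = (2\,3)(4\,5)$, and $\{3,4,6\} \in supp(f)$ is reduced to the previous case by first applying $\tau$. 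The main obstacle is purely the bookkeeping of the twelve explicit substitutions; no new structural idea beyond the transitivity just established is needed.
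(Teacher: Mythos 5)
Your proposal is correct and uses the same core idea as the paper: apply a column permutation that preserves the quadric $Q$ and the form $B$ (hence fixes $\Ots$ setwise and preserves codeword weight) to carry the given nonprincipal minor onto $\{1,2,5\}$. The paper reaches the same conclusion via an explicit two-step case analysis (first moving the contained pair to $\{2,5\}$, then fixing the third column), while you reframe it as transitivity of the order-$12$ subgroup $\langle S_3,\tau\rangle$ on the twelve nonprincipal minors; the orbit computation you give is correct and this is a mild organizational improvement, not a different route.
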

\begin{proof}
Let  $I \in supp(f)$, such that $I \neq I^*$. If $3,4 \in I$, then $$h = f(X_1 \| X_3 \| X_2 \| X_5 \| X_4 \| X_6).$$
If $1,6 \in I$, then $$h = f(X_2 \| X_1 \| X_3 \| X_4 \| X_6 \| X_5).$$

Note there is $J$ such that $J \in supp(h)$ such that $2,5 \in J$.
If $1 \in J$, then $g = h$ satisfies $125 \in supp(g)$ and $wt(g) = wt(f)$.
If $6 \in J$ then $g = h(X_6 \| X_2 \| X_3 \| X_4 \| X_5 \| X_1)$ satisfies $125 \in supp(g)$ and $wt(g) = wt(f)$.
If $3 \in J$ then $g = h(X_3 \| X_2 \| X_1 \| X_6 \| X_5 \| X_4)$ satisfies $125 \in supp(g)$ and $wt(g) = wt(f)$.
If $4 \in J$ then $g = h(X_3 \| X_2 \| X_6 \| X_1 \| X_5 \| X_4)$ satisfies $125 \in supp(g)$ and $wt(g) = wt(f)$.

\end{proof}

\section{Calculating the minimum distance of $\CO$}

Having partitioned $\Ots$ according to the pivots of their representative matrices, we can now calculate the weight of a codeword by computing its weight on each individual pivot set. We split the proof in three cases: $q$ odd, $q=2$ and $q$ even but $q > 2$.  We begin with the odd characteristic case.
\subsection{odd characteristic case}

In this subsection we assume $q$ is an odd prime power. We begin by computing $wt(det_{236}(\X)-det_{456}(\X))$.

\begin{lemma}\label{lem:min codew-orthogonal-odd}
$$wt(det_{236}(\X)-det_{456}(\X)) = q^3-q^2. $$
\end{lemma}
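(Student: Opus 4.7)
The plan is to evaluate the codeword $det_{236}(\X) - det_{456}(\X)$ on each of the eight Schubert-cell partition sets $P_I$ of $\Ots$ introduced in Section 5, and sum the number of nonzero values on each cell. Recall that the cell sizes are $1, 1, q, q, q^2, q^2, q^3, q^3$ for $P_{123}, P_{124}, P_{135}, P_{145}, P_{236}, P_{246}, P_{356}, P_{456}$ respectively, so the weight will come essentially from at most the two largest cells.

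The first step is the structural observation that on six of the eight cells---namely $P_{123}, P_{124}, P_{135}, P_{145}, P_{246}, P_{356}$---both of the minors $det_{236}$ and $det_{456}$ vanish identically. In each instance, the $3 \times 3$ submatrix obtained by restricting the template to columns $\{2,3,6\}$ or $\{4,5,6\}$ has at least one row of all zeros, forced by the pivot pattern of the cell (the pivot row of the cell lies outside the chosen columns and occupies a distinct position). So these six cells contribute nothing.

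The second step treats the two remaining cells $P_{236}$ and $P_{456}$. On $P_{236}$ the $\{2,3,6\}$-minor is an anti-diagonal identity block with a nonzero constant value, while the $\{4,5,6\}$-minor vanishes by the same zero-row argument; the codeword is therefore a nonzero constant on all of $P_{236}$ and contributes $|P_{236}| = q^2$ to the weight. On $P_{456}$ the roles reverse: $det_{456}$ evaluates to an anti-diagonal identity constant, and a short cofactor expansion of the $\{2,3,6\}$-submatrix along its bottom row yields $a_5^2$. So on $P_{456}$ the codeword restricts to a univariate polynomial of the form $a_5^2 \pm 1$, independent of $a_2, a_3$.

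The third step counts the $\F_q$-zeros of this univariate quadratic: in odd characteristic (for the appropriate sign) the quadratic has exactly two roots, yielding $2q^2$ zero triples $(a_2, a_3, a_5)$ and so $q^3 - 2q^2$ nonzero contributions from $P_{456}$. Summing across the two nontrivial cells gives $q^2 + (q^3 - 2q^2) = q^3 - q^2$. The main obstacle is bookkeeping: twelve (cell, minor) pairs must be checked to vanish in the first step, and while each reduces to locating a zero row, the pivot patterns and column selections vary so the verifications are not a single general statement. The only nonformal point is the final count, which depends on the quadratic $a_5^2 \pm 1$ splitting into two roots over $\F_q$; this is where the parity of $q$ and the precise sign between the two minors enter.
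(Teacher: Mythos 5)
Your proposal takes essentially the same route as the paper: evaluate the codeword cell by cell across the eight Schubert cells, observe that only $P_{456}$ and $P_{236}$ contribute, and count zeros of a univariate quadratic on $P_{456}$. The one place you should tighten things up is the sign: to conclude a weight of exactly $q^3 - q^2$ you must verify that the restriction to $P_{456}$ is actually $a_5^2 - 1$ (with two roots $\pm 1$ in odd characteristic) rather than merely "$a_5^2 \pm 1$ for the appropriate sign," since $a_5^2 + 1$ has no roots in $\F_q$ when $q \equiv 3 \pmod 4$; the paper computes the two constant determinants $\det_{456}$ and $\det_{236}$ explicitly on the relevant cells to pin this down, and you should do the same.
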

\begin{proof}

Let $f = det_{236}(\X)-det_{456}(\X)$. Recall that the matrices in $P_{456}$ are of the form $\begin{bmatrix}
0 & a_2 & a_3 & 0 & 0 & 1\\
-a_2 & 0 & a_5 & 0 & 1 & 0\\
-a_3 & -a_5 & 0 & 1 & 0 & 0\\
\end{bmatrix}$ where $a_2,a_3,a_5 \in \F_{q}$. Note that for $M_W \in P_{456}$, $det_{456}(M_W) = 1$ and $det_{236}(M_W) = \begin{vmatrix}  a_2& a_3 & 1 \\
 0 & a_5 & 0 \\
 -a_5 & 1 & 0 \\ \end{vmatrix} = a_5^2.$ Therefore $f(M_W) = a_5^2-1$. For any of the $q^2$ possible values of $a_2$ and $a_3$, $f$ is determined by the values of $a_5$. We note that $f(A) = 0$ if $a_5=1$ or $a_5 = -1$. There are $q-2$ values such that $f(A) \neq 0$. Therefore $wt_{P_{456}}(f) = (q-2)q^2$.

Recall that $P_{356}$ is of the form $  \begin{bmatrix}
0 & b_2 & 0 & b_3 & 0 & 1\\
-b_2 & 0 & 0 & b_5 & 1 & 0\\
-b_3 & -b_5 & 1 & 0 & 0 & 0\\
\end{bmatrix}$ where the entries  $b_2, b_3, b_5 \in \F_{q} $. If $M_W \in P_{356}$ then $det_{456}(M_W) = 0$ since it has a zero row and $det_{236}(M_W) = \begin{vmatrix}  b_2& 0 & 1 \\
 0 & 0 & 0 \\
 -b_5 & 1 & 0 \\ \end{vmatrix} = 0.$  Therefore $wt_{P_{356}}(f) = 0$.

The matrices in $P_{246}$ are of the form $\begin{bmatrix}
0 & 0 & c_2 & 0 & c_3 & 1\\
-c_2 & 0 & 0 & 1 & 0 & 0\\
-c_3 & 1 & 0 & 0 & 0 & 0\\
\end{bmatrix}$ where $c_2, c_3 \in \F_q$. For any $M_W \in P_{246}$, $det_{236}(M_W) = \begin{vmatrix}  0& c_2 & 1 \\
 0 & 0 & 0 \\
 1 & 0 & 0 \\ \end{vmatrix} = 0.$ Therefore $f$ evaluates to $0$ on $P_{246}$.

The matrices in $P_{236}$ are of the form $\begin{bmatrix}
0 & 0 & 0 & d_2 & d_3 & 1\\
-d_2 & 0 & 1 & 0 & 0 & 0\\
-d_3 & 1 & 0 & 0 & 0 & 0\\
\end{bmatrix}$ where the entries  $d_2,d_3 \in \F_q$. For any $M_W \in P_{236}$, $det_{236}(M_W) = \begin{vmatrix}  0& 0 & 1 \\
 0 & 1 & 0 \\
 1 & 0 & 0 \\ \end{vmatrix} = 1.$ Therefore $f$ evaluates to $1$ on $P_{236}$ and $wt_{P_{236}}(f) = q^2$.
 
 Note that $236$ and $456$ contain column $6$. Column $6$ in each matrix in the the remaining pivot sets is zero. Therefore $det_{236}(M_W)$ and $det_{456}(M_W)$ evaluates to zero on the remaining pivot sets. Therefore $wt(f) = wt_{P_{456}}(f) + wt_{P_{236}}(f) = q^3-q^2$.  \end{proof}


We shall determine the weight of $f \in \Dfts$ when evaluated on the sets $P_I$. First we shall assume that $f$ contains no minor of the form $I = I^*$ in its support.

\begin{lemma}

Let $f \in \Dfts$ where $f \neq 0$. Suppose that no minor in the support of $f$ satisfies $I = I^*$. Then $wt(f) \geq q^3-q^2$
\end{lemma}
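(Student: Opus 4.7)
The plan is to restrict $f$ to the largest Schubert cell $P_{456}$ and show that the resulting polynomial admits a rank-$2$ bilinear factorization with few zeros, then recover any residual deficit from the smaller cells. By Lemma \ref{lem:principal125} I may assume $125\in\text{supp}(f)$, so $c_{125}\neq 0$. Computing the image of each of the $12$ non-self-dual $3$-minors on $P_{456}$ directly, and noting that only self-dual minors produce constants or squared-variable monomials, the restriction takes the form
\[
g(a_2,a_3,a_5) = \alpha_1\,a_2 a_3 + \alpha_2\,a_2 a_5 + \alpha_3\,a_3 a_5 + \beta_1 a_2 + \beta_2 a_3 + \beta_3 a_5
\]
with neither constant nor squared-variable terms, and with $\alpha_1=c_{134}-c_{125}$. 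The analogous computation on $P_{356}$ produces a $b_2 b_3$-coefficient equal to $-c_{134}-c_{125}$; these sum to $-2c_{125}\neq 0$ in odd characteristic, so at least one of the two is nonzero. Since the column swap $3\leftrightarrow 4$ preserves the quadric $x_1x_6+x_2x_5+x_3x_4$ and hence is an automorphism of $\Ots$ exchanging $P_{456}$ with $P_{356}$ (as in the remark following the Schubert cell list), after replacing $f$ by an equivalent codeword of the same weight I may assume $\alpha_1\neq 0$.

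With $\alpha_1\neq 0$ I complete the bilinear square via the bijective substitution $u=a_2+(\alpha_3 a_5+\beta_2)/\alpha_1$, $v=a_3+(\alpha_2 a_5+\beta_1)/\alpha_1$ at each fixed $a_5$, obtaining
\[
g = \alpha_1 u v - C(a_5), \qquad C(a_5)=\tfrac{1}{\alpha_1}(\alpha_2 a_5+\beta_1)(\alpha_3 a_5+\beta_2)-\beta_3 a_5,
\]
a polynomial of degree at most $2$ in $a_5$. For fixed $a_5$, the equation $uv=C(a_5)/\alpha_1$ has $q-1$ solutions in $\F_q^2$ when $C(a_5)\neq 0$ and $2q-1$ solutions when $C(a_5)=0$. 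If $t$ is the number of $\F_q$-roots of $C$, summing over $a_5$ gives $q^2-q+qt$ zeros on $P_{456}$, hence a contribution of $q^3-q^2+q(1-t)$ to $wt(f)$. This already yields $wt(f)\geq q^3-q^2$ whenever $C\not\equiv 0$ has at most one $\F_q$-root.

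The remaining cases, $t=2$ and $C\equiv 0$, are the main obstacle. Both impose algebraic relations on the coefficients $c_I$: in particular $C\equiv 0$ forces $\alpha_2\alpha_3=\beta_1\beta_2=0$ together with $\alpha_1\beta_3=\alpha_2\beta_2+\alpha_3\beta_1$. In these cases the proof must recover a deficit of $q$ (when $t=2$) or $q^2-q$ (when $C\equiv 0$) from the smaller cells. The key is that $c_{125}\neq 0$ forces nonzero contributions of $det_{125}$ on each of $P_{356},P_{246},P_{236},P_{145},P_{135}$ (for example $det_{125}|_{P_{236}}=-d_2d_3$ is nonzero on $(q-1)^2$ matrices), and the relations above prevent these contributions from being cancelled by the remaining non-self-dual minors. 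The hardest subcase is $C\equiv 0$: one must simultaneously analyze $f|_{P_{356}}$, whose $b_2 b_3$-coefficient $-2c_{125}-\alpha_1$ is likewise nonzero, and track coefficient relations across both large cells to account for the missing $q^2-q$ weight.
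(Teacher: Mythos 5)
Your setup is sound and, in fact, sharper than the paper's in one respect: the exact zero count on $P_{456}$ via completing the bilinear square (giving $q^3-q^2+q(1-t)$ for $t$ the number of $\F_q$-roots of $C$) is a genuine refinement of the paper's flat bound $wt_{P_{456}}(f)\ge (q-1)^2q$. Your observations that $\alpha_1 + (\text{coeff on }P_{356}) = -2c_{125}\ne 0$, and that the column swap $3\leftrightarrow 4$ exchanges $P_{456}$ with $P_{356}$, are exactly the right ingredients for a WLOG reduction.

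However, the proof is not complete, and the gap is not minor bookkeeping. In the surviving hard cases ($t=2$ and $C\equiv 0$) you assert that ``$c_{125}\neq 0$ forces nonzero contributions of $det_{125}$ on each of $P_{356}, P_{246}, P_{236}, P_{145}, P_{135}$.'' This is true of $det_{125}$ in isolation, but the contributions you need are the contributions of $f$, and on each of those cells the monomial produced by $det_{125}$ (e.g.\ $c_2c_3$ on $P_{246}$, $e_2$ on $P_{145}$) receives a contribution of the same monomial from $det_{134}$ as well. The coefficient on $P_{246}$ is $\pm(c_{125}+c_{134})$ and on $P_{145}$ is $\pm(c_{125}+c_{134})$; knowing only $\alpha_1 = c_{134}-c_{125}\ne 0$ says nothing about $c_{125}+c_{134}$, which can perfectly well be zero (take $c_{134}=-c_{125}$). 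So the claim that the smaller cells must contribute is unjustified, and your ``key'' fails precisely in the subcase you identify as hardest. Similarly your assertion that the $P_{356}$ coefficient $-\alpha_1-2c_{125}$ is ``likewise nonzero'' is false in general; nothing in the $C\equiv 0$ constraints bounds $\alpha_1$ away from $-2c_{125}$.

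What the paper does instead is dichotomize on the two large-cell coefficients $\pm(c_{134}-c_{125})$ and $\pm(c_{134}+c_{125})$: if both are nonzero, $wt(f)\ge 2(q-1)^2q > q^3-q^2$ and you are done; if exactly one vanishes, you obtain the \emph{exact} relation $c_{134}=\pm c_{125}$, and it is this relation (not merely $c_{125}\neq 0$) that guarantees the coefficient of $c_2c_3$ on $P_{246}$ (resp.\ $P_{236}$) and of $e_2$ on $P_{145}$ (resp.\ $P_{135}$) equals $\pm 2c_{125}\ne 0$. Adding $(q-1)^2q + (q-1)^2 + (q-1) = q^3-q^2$ then closes the argument uniformly, without needing the $t$-analysis at all. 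If you want to salvage your approach, replace the appeal to $det_{125}$ alone by the case analysis on $c_{134}\pm c_{125}$: one of them vanishes exactly (else you already exceed $q^3-q^2$), and the resulting sign relation determines which of the pairs $\{P_{246},P_{236}\}$ and $\{P_{145},P_{135}\}$ carry the nonzero $c_2c_3$ and $e_2$ coefficients needed to recover the deficit.
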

\begin{proof}
Let $f$ be as in the hypothesis of the Lemma. Since no term of $f$ is principal and $f \neq 0$  we may apply Lemma \ref{lem:principal125} and rescale. Without loss of generality we may assume $f_{125} = 1$. We bound $wt(f)$ by considering the contributions given by $det_{125}$ and $det_{134}$ on $P_{456}$ and $P_{356}$.
 
 The matrices in $P_{456}$ are of the form $ \begin{bmatrix}
0 & a_2 & a_3 & 0 & 0 & 1\\
-a_2 & 0 & a_5 & 0 & 1 & 0\\
-a_3 & -a_5 & 0 & 1 & 0 & 0\\
\end{bmatrix}$ where  $a_2, a_3, a_5 \in \F_q$. When we evaluate $f$ on the matrices $M_W$ note that no term of the form $a_2^2, a_3^2 , a_5^2$ appears because we have no minors satisfying $I = I^*$.

Since the determinant $det_{125}(M_W) = 
\begin{vmatrix}  0& a_2 & 0 \\
 -a_2 & 0 & 1 \\
 -a_3 & -a_5 & 0 \\ \end{vmatrix} = -a_2a_3$ and the determinant $det_{134}(M_W) = 
\begin{vmatrix}  0& a_3 & 0 \\
 -a_2 & a_5 & 0 \\
 -a_3 & 0 & 1 \\ \end{vmatrix} = a_2a_3$

Recall that $P_{356}$ is of the form $  \begin{bmatrix}
0 & b_2 & 0 & b_3 & 0 & 1\\
-b_2 & 0 & 0 & b_5 & 1 & 0\\
-b_3 & -b_5 & 1 & 0 & 0 & 0\\
\end{bmatrix}$ where $b_2, b_3, b_5 \in \F_q$. As in the case of $P_{456}$ when we evaluate $f$ on the matrices $M_W$ note that no term of the form $b_2^2, b_3^2 , b_5^2$ appears because we have no minors satisfying $I = I^*$. However, on this case $det_{125}(M_W) = 
\begin{vmatrix}  0& b_2 & 0 \\
 -b_2 & 0 & 1 \\
 -b_3 & -b_5 & 0 \\ \end{vmatrix} = -b_2b_3$ and $det_{134}(M_W) = 
\begin{vmatrix}  0& 0 & b_3 \\
 -b_2 & 0 & b_5 \\
 -b_3 & 1 & 0 \\ \end{vmatrix} = -b_2b_3$

Note that on $P_{456}$ $f$ evaluates to a polynomial on $a_2,a_3$ and $a_5$ where $a_2a_3$ appears with coefficient $f_{125}+f_{134}$. Note that on $P_{356}$ $f$ evaluates to a polynomial on $b_2,b_3$ and $b_5$ where $b_2b_3$ appears with coefficient $f_{125}-f_{134}$. If $f_{125}+f_{134} \neq 0$ then $wt_{P_{456}}(f) \geq (q-1)^2q$. If $f_{125}-f_{134} \neq 0$ then $wt_{P_{356}}(f) \geq (q-1)^2q$. If both $f_{125}-f_{134} \neq 0$ and $f_{125}+f_{134} \neq 0$ then $wt(f) \geq 2(q-1)^2q$. Since we are assuming $q$ is odd, then $wt(f) > wt_{P_{456}}(f) + wt_{P_{356}}(d) > q^3-q^2$. Therefore we may assume $f_{134} = f_{125}$.  Now we compare the weights on $P_{246}$ and $P_{236}$.

The matrices in $P_{246}$ are of the form $\begin{bmatrix}
0 & 0 & c_2 & 0 & c_3 & 1\\
-c_2 & 0 & 0 & 1 & 0 & 0\\
-c_3 & 1 & 0 & 0 & 0 & 0\\
\end{bmatrix}$ where $c_2,c_3 \in \F_q$. 
As in the case of the previous pivots, no term of the form $c_2^2$, $c_3^2$ appears.
On $P_{246}$ the function $det_{125}(M_W) = 
\begin{vmatrix}  
  0& 0 & c_3 \\
 -c_2 & 0 & 0 \\
 -c_3 & 1 & 0 \\ \end{vmatrix} = -c_2c_3$ and $det_{134}(M_W) = 
\begin{vmatrix}
  0& c_2 & 0 \\
 -c_2 & 0 & 1 \\
 -c_3 & 0 & 0 \\ \end{vmatrix} = -c_2c_3$. Since $f_{125} + f_{134} = 2 \neq 0$ reasoning as in the previous case $wt_{P_{246}}(f) \geq (q-1)^2$.
 
The matrices in  $P_{145}$ are of the form  $$\begin{bmatrix}
0 & 0 & e_2 & 0 & 1 & 0\\
0 & -e_2 & 0 & 1 & 0 & 0\\
1 & 0 & 0 & 0 & 0 & 0\\
\end{bmatrix}.$$ In this case $det_{125}(M_W) = 
\begin{vmatrix}  
  0& 0 & 1 \\
 0 & -e_2 & 0 \\
 1 & 1 & 0 \\ \end{vmatrix} = e_2$ and $det_{134}(M_W) = 
\begin{vmatrix}
  0& e_2 & 0 \\
 0 & 0 & 1 \\
 1 & 0 & 0 \\ \end{vmatrix} = e_2$. Since $e_2$ appears with $2$ as a coefficient $wt_{P_{145}}(f) = q-1$.
 
 Adding the different weights together we obtain:
 
 $$wt(f)\geq wt_{P_{456}}(f) + wt_{P_{246}}(f) + wt_{P_{145}}(f)\geq (q-1)^2q+(q-1)^2 + (q-1) = q^3-q^2.$$ 
  \end{proof}

Now we have determined that all nonzero $f\in \Dfts$ with no minors of the form $I = I^*$ in their support have weight at least $q^3-q^2.$ Now we assume it has a minor of the form $I = I^*$.

\begin{lemma}

Let $f \in \Dfts$ where $f \neq 0$. Suppose that $f_{123}$. Then $wt(f) \geq q^3-q^2$.
\end{lemma}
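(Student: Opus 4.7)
The plan is to assume, by Lemma \ref{lem:principal123} and rescaling, that $f_{123} = 1$, and then bound $wt_{P_I}(f)$ on each pivot set separately. The key preparatory computation evaluates $det_{123}$ on each $P_I$: one finds $det_{123}|_{P_{123}} = -1$, $det_{123}|_{P_{145}} = e_2^2$, $det_{123}|_{P_{246}} = -c_2^2$, and $det_{123}|_{P_{356}} = b_2^2$, while $det_{123}$ vanishes on the remaining pivot sets $P_{124}, P_{135}, P_{236}, P_{456}$ (either because the submatrix on columns $1, 2, 3$ contains a zero row, or because in odd characteristic the determinant of a $3 \times 3$ skew--symmetric matrix is zero).

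Since $f_{123} = 1$, on each of $P_{145}$, $P_{246}$, $P_{356}$ the restriction $f|_{P_I}$ is a polynomial in which the square variable ($e_2$, $c_2$, or $b_2$) appears quadratically with leading coefficient $\pm 1$. Viewing $f|_{P_I}$ as a quadratic in that one variable with the remaining variables held fixed, each fiber admits at most two roots. Combined with $wt_{P_{123}}(f) = 1$, these estimates give the naive bound
\[
wt(f) \geq (q^3 - 2q^2) + (q^2 - 2q) + (q - 2) + 1 = q^3 - q^2 - q - 1,
\]
which is $q+1$ short of the target $q^3 - q^2$.

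To close this gap, I would perform a case analysis on which of the principal--minor coefficients $f_{124}, f_{135}, f_{236}, f_{456}$ is nonzero. When one of them is nonzero, the corresponding pivot set $P_I$ contributes extra weight via the additional square terms appearing in the restriction of $det_I$ there (e.g. $det_{124}|_{P_{456}} = a_2^2$, $det_{135}|_{P_{456}} = -a_3^2$, $det_{236}|_{P_{456}} = a_5^2$). When all four vanish, $f = det_{123} + h$ with $h$ supported only on non--principal minors, and the vanishing of cross--term coefficients such as $-(f_{125} + f_{134})$ for $b_2 b_3$ on $P_{356}$ and the analogous $c_2 c_3$ term on $P_{246}$ can be used to sharpen the quadratic zero counts on those sets.

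The main obstacle is the tightness of the at--most--two--roots bound on $P_{356}$: the estimate of $2q^2$ zeros is tight only when the discriminant of $f|_{P_{356}}$, viewed as a quadratic in $b_2$, is a square at every fiber $(b_3, b_5)$. I expect the proof either rules this out by a character--sum / discriminant argument exploiting that only half of $\F_q^{\times}$ is a square in odd characteristic, or shows that such excess zeros on $P_{356}$ are always compensated by nontrivial weight contributions on $P_{124}, P_{135}, P_{236}$, and $P_{456}$, where $det_{123}$ itself does not interfere.
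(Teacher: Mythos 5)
Your setup is sound: rescaling to $f_{123}=1$, the computations $det_{123}|_{P_{356}}=b_2^2$, $det_{123}|_{P_{246}}=-c_2^2$, $det_{123}|_{P_{145}}=e_2^2$, $det_{123}|_{P_{123}}=-1$ are correct, and the resulting naive bound $q^3-q^2-q-1$ is correct; the paper's own proof implicitly works from this same starting estimate. However, your plan for closing the remaining $q+1$ gap has two genuine problems.

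First, the case split is not exhaustive in the way you describe. There are eight principal minors ($123,124,135,145,236,246,356,456$), not five, and when $f_{124}=f_{135}=f_{236}=f_{456}=0$ it is \emph{not} true that $f=det_{123}+h$ with $h$ supported only on non-principal minors: the coefficients $f_{145},f_{246},f_{356}$ can still be nonzero and are in fact the source of the hardest subcase. The paper's final stage deals precisely with $f$ supported on $\{123,145,246,356\}$, where $f|_{P_{356}}$ is a diagonal quadratic form in $b_2,b_3,b_5$; it closes the gap using the solution count for quadratic forms over $\F_q$ (Lidl--Niederreiter), which guarantees at most $q^2+q$ zeros on $P_{356}$ once at least two of $f_{145},f_{246},f_{356}$ are nonzero. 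Your ``discriminant / character-sum'' speculation lands in the right neighborhood, but it applies only \emph{after} this reduction, which you do not carry out.

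Second, reducing to that final configuration requires more than the cross-term observations you mention. The paper needs (i) pinching arguments that pair pivot sets (e.g.\ $det_{123}$ vanishes on $P_{456}$ while $det_{124}$ does not, and the reverse on $P_{356}$, so $f_{123},f_{124}$ both nonzero forces too much total weight), \emph{and} (ii) weight-preserving column substitutions from the automorphism lemma (Lemma~\ref{lem:orthogonal1}) to actually eliminate non-principal coefficients once they are pinned to be equal --- for instance, after deducing $f_{134}=f_{125}$, the operation $X_2\mapsto X_2+f_{125}X_4$, $X_3\mapsto X_3-f_{125}X_5$ is used to kill $f_{125}$ outright. Without this column-operation tool, showing that the six pairs of non-principal coefficients all vanish (or cost too much weight) does not go through by pinching alone. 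So while your skeleton matches the paper's first step, the essential machinery for the reduction is missing and the case enumeration is misidentified.
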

\begin{proof}
Let $f$ be as in the hypothesis of the lemma. From Lema \ref{lem:principal123} we may assume $f_{123} = 1$. First we shall assume $det_{124}$ appears in the support of $f$. We shall consider the weight of $f$ evaluated on $P_{456}$ and $P_{356}$. On any matrix $M_W \in P_{456}$ $det_{123}(M_W) = 0$ and $det_{124}(M_W) = a_2^2$.  However, on any matrix $M_W \in P_{356}$ $det_{123}(M_W) = b_2^2$ and $det_{124}(M_W) = 0$.
If both $f_{123}$ and $f_{124}$ are not zero, then $wt_{P_{456}}(f) \geq (q-2)q^2 $ and $wt_{P_{356}}(f) \geq (q-2)q^2 $. Therefore $wt(f) \geq 2(q-2)q^2 = 2q^3-4q^2 \geq q^3-q^2.$ Note that $f$ evaluated on $P_{124}$ and $P_{123}$ is nonzero, which implies $wt(f) > q^3-q^2.$

Now we assume $f_{124} = 0$. A similar argument on the evaluation of $f$ on $P_{356}$ implies that if $f_{135}, f_{236} \neq 0$ then $wt(f) > q^3-q^2$. Therefore we assume $f_{124} = f_{236} = f_{135} = 0$.

If $f_{134} \neq f_{125}$ then $a_3a_2$ appears in the evaluation of $f$ on $P_{456}$. This implies $wt_{P_{456}}(f) \geq (q-1)^2q$. Since $wt_{P_{356}}(f) \geq (q-2)q^2 $ and therefore $wt(f) \geq (q-1)^2q + (q-2)q^2 = 2q^3-4q^2+q > q^3-q^2$. Therefore we may assume $f_{134} = f_{125}$. Likewise we may assume $f_{234} = f_{126}$ and $f_{136} = f_{235}$.

If $f_{125} \neq 0$ we consider the following $g \in \Dfts$:
$$g = f(X_1 \vert X_2 + f_{125} X_4 \vert X_3 - f_{125} X_5 \vert X_4 \vert X_5 \vert X_6).$$
This will eliminate the term $det_{125}$ from the expression of $g$, but $g_{134} = f_{134}+f_{125}$. If $f_{125} \neq 0$, $g$ satisfies $g_{125} \neq g_{134}$ and therefore $wt(f) = wt(g) > q^3-q^2.$ 

Now we assume: $f_{123} = 1$ and $f_{124} = f_{125} = f_{126} = f_{134} = f_{135} = f_{136} = f_{234} = f_{235} = f_{236} = 0.$ This implies we may assume all determinants with two columns in the first three positions may be assumed to not appear in $supp(f)$. Now we consider the weights when determinants with one column on the first three positions.

We claim that if $f$ evaluates to a nonzero codeword on $P_{456}$, then the established bound on $P_{356}$ will imply $wt(f) > q^3-q^2$.
Note that $det_{346} = a_5$ and $det_{256} = -a_5$. Therefore if $f_{346} \neq f_{256}$, then $wt_{P_{456}}(f) \geq q^3-q^2$ and therefore $wt(f) > q^3-q^2.$ The same reasoning implies $f_{345} = f_{156}$  and  $f_{234} = f_{146}$. 

Note that $f$ has no functions which would evaluate to a $2\times 2$ minor on the pivot set $P_{356}$. If $f_{345}$ or $f_{346}$ are not zero, then $f$ evaluated on $P_{356}$ contains $b_3$ or $b_5$ and no other term containing $b_3$ or $b_5$ which implies $wt_{P_{356}} = q^3-q^2$ and therefore $wt(f)$ is too high.

If $456$ appears in the support, then we have a nonzero function on $P_{456}.$ Since the nonzero functions on $P_{456}$ have weight at least $(q-2)q^2$, this would imply the weight is too high.

We may now assume $f_{345} = f_{346} = f_{256} = f_{156} = 0.$ The only minors which may appear and not increase $wt(f)$ are $123$, $234$, $146$, $145$, $246$ and $356$ where $f_{123} = 1$ and $f_{234} = f_{146}.$

If $456$ appears in the support, then we have a nonzero function on $P_{456}.$ Since the nonzero functions on $P_{456}$ have weight at least $(q-2)q^2$, this would imply the weight is too high.

If $f_{234} \neq 0$ we consider the following $g \in \Dfts$:
$$g = f(X_1 - f_{234}X_4 \vert X_2 \vert X_3 + f_{234} X_6 \vert X_4 \vert X_5 \vert X_6).$$
In this case $g_{234} = 0$ but $g_{146} = 2 f_{234} \neq 0$. This implies $g$ evaluates to a nonzero function on both $P_{456}$ and $P_{356}$ and its weight is too high.

Therefore $f_{234} = f_{146} = 0$. We need only to consider the principal minors $123$, $145$, $246$ and $356$.

If only $123$ appears, then by swapping columns and considering the weight of $det_{456}$ we know that $wt(f) = q^3$.

If two or more $145$, $246$ or $356$ appear in the support of $P_2$, then the evaluation on $P_{356}$ is of the form $b_2^2 + f_{246}b_5^2 + f_{145}b_3^2 +f_{356}$. Without loss of generality we may permute columns and assume $f_{356} \neq 0$. This evaluation is a quadratic form on two or three variables. The estimates of the number of solutions to a quadratic equation from \cite{Lidl:1997} implies it has at most $q^2+q$ zeroes.  This improves the estimate of $wt_{P_{356}}(f)$ to $(q^2-q-1)q$. Using the simpler estimates of $wt_{P_{246}}(f) \leq (q-2)q$, $wt_{P_{145}}(f) \leq (q-2)$ and $wt_{P_{123}}(f) = 1$ we obtain that $wt(f) \leq wt_{P_{356}}(f) +wt_{P_{246}}(f)+wt_{P_{145}}(f)+wt_{P_{123}}(f) \geq q^3-2q-1 > q^3-q^2. $

Therefore only one of $145$, $236$ or $356$ can be nonzero. If $-f_{356}$ is not a square, then $wt_{P_{356}}(f) = q^3$ and $wt(f) > q^3-q^2$. If $-f_{356}$ is a square then $wt(f) = q^3-q^2.$  \end{proof}

This leads to our main result.

\begin{theorem}\label{thm:orthogonaldistanceqodd}
The minimum distance of the polar Orthogonal Grassmann code for $q$ odd is $d(\CO) = q^3-q^2$.
\end{theorem}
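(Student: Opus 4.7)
The plan is to assemble the theorem from the three preceding lemmas: one explicit codeword giving the upper bound, and two lemmas giving the lower bound split by whether or not the support of $f$ contains a principal minor.

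First, I would invoke Lemma \ref{lem:min codew-orthogonal-odd} to conclude that the codeword $ev_L(det_{236}(\X) - det_{456}(\X))$ has weight exactly $q^3 - q^2$, which witnesses the upper bound $d(\CO) \leq q^3 - q^2$. This immediately reduces the theorem to proving the matching lower bound: every nonzero $f \in \Dfts$ satisfies $wt(ev_L(f)) \geq q^3 - q^2$.

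For the lower bound, I would split into two mutually exhaustive cases depending on the support of $f$. In the first case, no minor $I$ with $I = I^*$ appears in $supp(f)$; then the penultimate lemma directly gives $wt(f) \geq q^3 - q^2$. In the second case, some principal minor appears in $supp(f)$, and here I would appeal to Lemma \ref{lem:principal123} to produce, via an allowable column permutation of $\X$ (which by the remarks following Lemma \ref{lem:orthogonal1} is a weight-preserving automorphism of $\Dfts$), a function $g$ with the same weight as $f$ satisfying $123 \in supp(g)$. After rescaling so that $g_{123} = 1$, the last lemma applies and gives $wt(g) \geq q^3 - q^2$, hence $wt(f) \geq q^3 - q^2$.

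Combining the two bounds yields $d(\CO) = q^3 - q^2$ for $q$ odd. The only genuine work is verifying that every nonzero codeword falls into exactly one of the two cases above, which is automatic from the definition of $supp(f)$: either some $I \in supp(f)$ satisfies $I = I^*$ (principal), or none does. The heavy lifting, the intricate weight computations on the Schubert cells $P_I$ and the reductions using the weight-preserving column automorphisms, has already been carried out in the earlier lemmas; the proof of the theorem itself is essentially a two-line invocation. The main conceptual obstacle, namely showing that the support-reduction argument is compatible with the orthogonality constraint, is handled by Lemma \ref{lem:orthogonal1} and the permutation remark that follows it, guaranteeing the substitutions used in Lemmas \ref{lem:principal123} and \ref{lem:principal125} do preserve $\Oll$ and thus preserve the weight of the corresponding codeword in $\CO$.
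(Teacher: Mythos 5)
Your proposal is correct and matches the paper's (implicit) argument: the upper bound comes from the explicit codeword $det_{236}(\X) - det_{456}(\X)$ of weight $q^3-q^2$, and the lower bound follows from the case split on whether $supp(f)$ contains a principal minor, handled by the two lemmas immediately preceding the theorem together with the weight-preserving reduction of Lemma \ref{lem:principal123}. The only cosmetic difference is that the paper applies Lemma \ref{lem:principal123} inside the proof of the principal-minor lemma rather than in the theorem's assembly, but this does not change the logical content.
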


\subsection{even characteristic case}

In this subsection we assume $q$ is an even prime power. Recall that in even characteristic the minors evaluated on $\Oll$ satisfy $det_{I} = det_{I^*}$.

\begin{lemma}\label{lem:min codew-orthogonal-even}
$$wt(det_{456}(\X)) = q^3. $$
\end{lemma}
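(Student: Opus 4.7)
The plan is to evaluate $det_{456}(\X)$ on each of the eight pivot classes $P_I$ from the Schubert partition of $\Ots$ given earlier, sum the contributions, and observe that only $P_{456}$ contributes anything. This is a direct computation: each class is parametrized by an explicit family of $3 \times 6$ matrices, so the $3\times 3$ submatrix on columns $4,5,6$ can simply be read off.

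First I would handle $P_{456}$. Here columns $4,5,6$ of the representative matrix form the anti-diagonal matrix
$$\begin{pmatrix} 0 & 0 & 1 \\ 0 & 1 & 0 \\ 1 & 0 & 0 \end{pmatrix},$$
whose determinant equals $-1$, which is $1$ in characteristic $2$. Hence $det_{456}$ evaluates to the constant $1$ on $P_{456}$, contributing $|P_{456}| = q^3$ nonzero coordinates.

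Next I would dispatch the remaining seven classes by inspection. For $P_{356}$, $P_{246}$, and $P_{236}$, the $3\times 3$ submatrix on columns $4,5,6$ of any representative has an all-zero row (the third row in the explicit parametrizations), so its determinant vanishes. For $P_{145}$, $P_{135}$, $P_{124}$, and $P_{123}$, the sixth column of every representative is identically zero, so again $det_{456}$ vanishes. Collecting the contributions gives $wt(det_{456}(\X)) = q^3 + 0 + \cdots + 0 = q^3$.

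There is no real obstacle here, only bookkeeping across the eight cases. Note that Corollary \ref{coll: submat conj} is not needed for this particular minor since $\{4,5,6\}^* = \{1,2,3\}^* \ne \{4,5,6\}$ under the convention used, but the computation is unambiguous directly from the displayed normal forms. The characteristic-two hypothesis only enters to simplify the sign of the anti-diagonal determinant on $P_{456}$; everywhere else the determinant vanishes for combinatorial reasons independent of $q$.
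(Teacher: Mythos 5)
Your proof is correct and follows the same strategy as the paper's: evaluate $det_{456}$ on each pivot class, note that it is a nonzero constant on $P_{456}$ (contributing $q^3$ coordinates), and vanishes identically on the remaining seven classes. You are slightly more careful than the paper in recording the anti-diagonal determinant as $-1$ (hence $1$ only after reducing mod $2$), but this does not change the weight count.
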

\begin{proof}

Let $f = det_{456}(\X)$. Recall that the matrices in $P_{456}$ are of the form $ \begin{bmatrix}
0 & a_2 & a_3 & 0 & 0 & 1\\
-a_2 & 0 & a_5 & 0 & 1 & 0\\
-a_3 & -a_5 & 0 & 1 & 0 & 0\\
\end{bmatrix}$. This implies $det_{456}(M_W) = 1 $ for all $M_W \in P_{456}$. Therefore $wt_{P_{456}}(f) = q^3$. The minor $456$ is zero on all other pivot sets. Therefore $wt(f) = wt_{P_{456}}(f)$\end{proof}



Because of the different behaviour of $ev(X^2)$ in $\F_q$ for $q=2$ and for $q > 2$, we split the proof in two subcases.
\subsubsection{$q=2$}
In this subsection we shall assume $q=2$. As in the previous case we shall determine the weight of $f \in \Dfts$ when evaluated on the sets $P_I$. First we shall assume that $f$ contains only minors of the form $I = I^*$ in its support.

\begin{lemma}

Let $f \in \Dfts$ where $f \neq 0$. Suppose that all minors in the support of $f$ satisfy $I = I^*$. Then $wt(f) \geq q^3$
\end{lemma}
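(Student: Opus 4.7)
The plan is to exploit the fact that the eight column sets $I \subseteq [6]$ with $I = I^*$ are exactly the pivot sets of $\Ots$, and that they split into two classes of size four according to the parity of $|I \cap \{1,2,3\}|$:
$$\mathcal{E} = \{124, 135, 236, 456\}, \qquad \mathcal{O} = \{123, 145, 246, 356\}.$$
Geometrically these are the two families of maximal totally singular subspaces of $Q^+(5,q)$. Accordingly I would write $f = f_{\mathcal{E}} + f_{\mathcal{O}}$, each summand a linear combination of principal minors from a single class. The first step is to verify, by direct inspection of the eight explicit representatives listed earlier, that for $I \in \mathcal{E}$ and $J \in \mathcal{O}$ (or vice versa) one has $ev_{P_J}(det_I(\X)) = 0$: in three of the four cases the relevant $3 \times 3$ submatrix has a zero row, while in the fourth case (namely $I = J^C$) it is a $3 \times 3$ skew-symmetric matrix and so has vanishing determinant in every characteristic. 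This cross-vanishing gives
$$wt(f) = wt_{\mathcal{E}}(f_{\mathcal{E}}) + wt_{\mathcal{O}}(f_{\mathcal{O}}),$$
where the subscripts denote weights restricted to $\bigcup_{J \in \mathcal{E}} P_J$ and $\bigcup_{J \in \mathcal{O}} P_J$ respectively.

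The full column reversal $C_i \mapsto C_{7-i}$ is an automorphism of $\Ots$ by the remark following Lemma \ref{lem:orthogonal1}, and it exchanges $\mathcal{E}$ with $\mathcal{O}$. So it suffices to show $wt_{\mathcal{E}}(f_{\mathcal{E}}) \geq q^3$ whenever $f_{\mathcal{E}} \neq 0$. Next I would tabulate $ev_{P_J}(det_I)$ for each $I, J \in \mathcal{E}$; each value is either $\pm 1$ (when $I = J$) or $\pm \alpha^2$ for a single off-diagonal parameter $\alpha$ of $P_J$. The crucial observation is that for $q = 2$ we have $\alpha^2 = \alpha$ for every $\alpha \in \F_2$, so the restriction of $f_{\mathcal{E}}$ to each $P_J$ becomes an \emph{affine linear} function in the free parameters of $P_J$.

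The payoff is that $\bigcup_{J \in \mathcal{E}} P_J$ consists of $8 + 4 + 2 + 1 = 15 = q^3 + q^2 + q + 1$ points whose parameter structure matches the standard affine cover of $\Pp^3(\F_2)$: $P_{456}$ corresponds to the chart $X_3 = 1$, $P_{236}$ to $\{X_3 = 0, X_2 = 1\}$, $P_{135}$ to $\{X_2 = X_3 = 0, X_1 = 1\}$, and $P_{124}$ to the point $(1 : 0 : 0 : 0)$. Under this identification $f_{\mathcal{E}}$ becomes exactly the evaluation on $\Pp^3(\F_2)$ of the linear form
$$L(X_0, X_1, X_2, X_3) = c_{124} X_0 + c_{135} X_1 + c_{236} X_2 + c_{456} X_3.$$
A nonzero projective linear form vanishes on a hyperplane of $q^2 + q + 1$ points and is nonzero on the remaining $q^3$ points, so $wt_{\mathcal{E}}(f_{\mathcal{E}}) = q^3$ whenever $f_{\mathcal{E}} \neq 0$. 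Since $f \neq 0$ forces at least one of $f_{\mathcal{E}}, f_{\mathcal{O}}$ to be nonzero, the bound $wt(f) \geq q^3$ follows.

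The main obstacle I expect is the first step: cleanly verifying the cross-vanishing of opposite-parity principal minors on every $P_J$. This is pure bookkeeping across the sixteen cross pairs, with no new ideas, but it is where indexing and sign slips are most likely. Once that split is in hand, the weight bound reduces to the standard fact that the first-order projective Reed--Muller code on $\Pp^3(\F_q)$ has minimum distance $q^3$.
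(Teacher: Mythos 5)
Your proof is correct, and it takes a slightly different, more structural route than the paper's. The paper first applies Lemma~\ref{lem:principal123} to normalize $f_{123}=1$, then accumulates weight contributions on the four pivot sets $P_{356}, P_{246}, P_{145}, P_{123}$, using the identity $\alpha^2=\alpha$ in $\F_2$ to reduce the restricted evaluation to an affine linear form in the free parameters and obtaining $wt(f)\geq (q-1)q^2+(q-1)q+(q-1)+1=q^3$. Your argument avoids the normalization step entirely: you split the eight principal column sets into the two parity classes $\mathcal{E}$ and $\mathcal{O}$, verify the cross-vanishing that isolates $f_{\mathcal{E}}$ on the $\mathcal{E}$-pivot sets and $f_{\mathcal{O}}$ on the $\mathcal{O}$-pivot sets, and then identify each restricted evaluation with a first-order projective Reed--Muller codeword on $\Pp^3(\F_2)$. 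Both rely on exactly the same arithmetic fact ($\alpha^2=\alpha$ for $q=2$), and the sum $q^3 + q^2 + q + 1$ of affine-cell sizes the paper works with is precisely $|\Pp^3(\F_2)|$, so the two computations are ultimately the same; but your framing makes the Klein correspondence visible (the two parity classes are the two rulings of $Q^+(5,q)$) and lets you cite the known minimum distance of $PRM(1,3)$ rather than tallying the four lower bounds by hand. The one cost of your version is that it requires verifying the cross-vanishing table, which the paper's normalization sidesteps; on the other hand, your argument exposes the symmetry $\mathcal{E}\leftrightarrow\mathcal{O}$ under full column reversal explicitly rather than hiding it inside Lemma~\ref{lem:principal123}. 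Both proofs are sound and specific to $q=2$ for the same reason (the squaring map being the identity).
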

\begin{proof}
Let $f$ be as in the hypothesis of the lemma. Since $f \neq 0$ and it contains a principal minor in its support, Lemma \ref{lem:principal123} and rescaling implies we may assume $f_{123} = 1$.

Recall that $P_{356}$ is of the form $  \begin{bmatrix}
0 & b_2 & 0 & b_3 & 0 & 1\\
-b_2 & 0 & 0 & b_5 & 1 & 0\\
-b_3 & -b_5 & 1 & 0 & 0 & 0\\
\end{bmatrix}$. The evaluation of $f$ on $M_W$ for $M_W \in P_{356}$ is $b_2^2 + f_{246}b_5^2 + f_{145}b_3^2 + f_{356}.$ Since $q = 2$, this evaluates to the same function as $b_2 + f_{246}b_5 + f_{145}b_3 + f_{356}.$ Therefore $wt_{P_{356}}(f) = (q-1)q^2$.

The matrices in $P_{246}$ are of the form $$\begin{bmatrix}
0 & 0 & c_2 & 0 & c_3 & 1\\
-c_2 & 0 & 0 & 1 & 0 & 0\\
-c_3 & 1 & 0 & 0 & 0 & 0\\
\end{bmatrix}.$$ 
The same argument as in the previous case yields $wt_{P_{356}}(f) = (q-1)q$.
As in the case of the previous pivots, no term of the form $c_2^2$, $c_3^2$ appears.
The matrices in  $P_{145}$ are of the form  $$\begin{bmatrix}
0 & 0 & e_2 & 0 & 1 & 0\\
0 & -e_2 & 0 & 1 & 0 & 0\\
1 & 0 & 0 & 0 & 0 & 0\\
\end{bmatrix}.$$ Therefore $wt_{P_{145}}(f) = (q-1)$.
 Note that $P_{123}$ is the set containing the matrix $$\begin{bmatrix}
0 & 0 & 1 & 0 & 0 & 0\\
0 & 1 & 0 & 0 & 0 & 0\\
1 & 0 & 0 & 0 & 0 & 0\\
\end{bmatrix}.$$ Therefore $wt_{P_{123}}(f) = 1$,
 Adding the different weights together we obtain:
 
 $$wt(f)\geq wt_{P_{456}}(f) + wt_{P_{246}}(f) + wt_{P_{145}}(f) + wt_{P_{123}}(P) \geq q^3.$$ 
  \end{proof}
  
  Because all minors were principal, no terms of the form $b_i$ nor $b_ib_j$, $b_i \neq b_j$ appeared. Now we compute the remaining case.

\begin{lemma}

Let $f \in \Dfts$ where $f \neq 0$. Suppose that there is a nonprincipal minor $I$ such that $I \in supp(f), I^* \not \in supp(f)$. Then $wt(f) \geq q^3$
\end{lemma}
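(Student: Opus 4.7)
The plan is to mirror the approach of the previous lemma: partition $\Ots$ by the pivot sets $P_I$, compute $f|_{P_I}$ on each, and sum lower bounds on the individual weights. The first step is a normalization. By Lemma \ref{lem:principal125}, whose column-swap reductions commute with the involution $I \mapsto I^*$, I may assume that $125 \in supp(f)$ and $134 \not\in supp(f)$, and after rescaling that $f_{125} = 1$. Since by Corollary \ref{coll: submat conj} the functions $det_{125}$ and $det_{134}$ agree on $\Ots$ in even characteristic, the pair $\{125, 134\}$ contributes $det_{125}$ with effective coefficient $f_{125} + f_{134} = 1$ to $f|_{P_I}$ on every pivot set.

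The heart of the argument is a parity bound on each $f|_{P_I}$. On $P_{456}$, writing $f|_{P_{456}}$ as a polynomial in $(a_2, a_3, a_5) \in \F_2^3$ and using $a_i^2 = a_i$ in $\F_2$, the coefficient of the monomial $a_2 a_3$ equals the effective coefficient $1$, so $f|_{P_{456}}$ is not identically zero. The only three-minor that could produce a cubic monomial $a_2 a_3 a_5$ on $P_{456}$ is $det_{123}$, and this is the determinant of a $3\times 3$ skew-symmetric matrix with zero diagonal --- hence zero in characteristic $2$. Consequently $f|_{P_{456}}$ has no cubic term, and a direct check gives $\sum_{(a_2, a_3, a_5) \in \F_2^3} f|_{P_{456}} = 0$; therefore $wt_{P_{456}}(f)$ is even and at least $2$. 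The same argument on $P_{356}$ yields $wt_{P_{356}}(f) \geq 2$.

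On $P_{246}$ and $P_{236}$ the evaluation of $f$ takes the form $c_2 c_3 + A c_2 + B c_3 + C$ over $\F_2^2$, and summing over the four points yields $1$ in $\F_2$; each weight is therefore odd and at least $1$. Finally, on the four remaining pivot sets $P_{145}, P_{135}, P_{124}, P_{123}$ the evaluation of $f$ is affine in at most one variable; a short case analysis on $(f_{123}, f_{124}) \in \F_2^2$ shows that $wt_{P_{145}}(f) + wt_{P_{123}}(f) \geq 1$ and $wt_{P_{135}}(f) + wt_{P_{124}}(f) \geq 1$, so the combined weight on these four sets is at least $2$. Summing all lower bounds gives $wt(f) \geq 2 + 2 + 1 + 1 + 2 = 8 = q^3$.

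The principal obstacle will be the bookkeeping: for each of the eight pivot sets one must list the twenty $3$-minors, identify those that vanish in characteristic $2$, and determine the exact monomial in the free parameters produced by each. The key structural point making the argument succeed is that fixing $f_{125} = 1$ forces $det_{125}|_{P_I}$ to appear with nonzero coefficient on six of the eight pivot sets --- namely as $a_2 a_3, b_2 b_3, c_2 c_3, d_2 d_3, e_2, x_2$ respectively --- and this is precisely enough to reach the target weight $q^3$.
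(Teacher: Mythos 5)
Your proof is correct, and it actually repairs a gap in the paper's own argument. The paper uses the same normalization $f_{125} = 1$, $f_{134} = 0$ (it writes ``$f_{234} = 0$'', which is a typo, since $125^* = 134$), and then bounds each $wt_{P_I}(f)$ separately, claiming in particular that $wt_{P_{145}}(f) \geq q-1 = 1$ and $wt_{P_{135}}(f) \geq q-1 = 1$ on the grounds that $det_{125}$ restricts to $e_2$ (resp.\ $x_2$) there. But over $\F_2$ one also has $det_{123}|_{P_{145}} = e_2^2 = e_2$ and $det_{124}|_{P_{135}} = x_2^2 = x_2$, so the coefficient of $e_2$ in $f|_{P_{145}}$ is $f_{123} + f_{125} + f_{134}$ and can vanish; for instance $f = det_{125} + det_{123}$ satisfies the lemma's hypothesis yet has $f|_{P_{145}} \equiv 0$, so those two per-cell bounds are false as stated. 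Your pairings $wt_{P_{145}}(f) + wt_{P_{123}}(f) \geq 1$ and $wt_{P_{135}}(f) + wt_{P_{124}}(f) \geq 1$ handle exactly this: the cancellation on $P_{145}$ (resp.\ $P_{135}$) forces $f_{123} = 1$ (resp.\ $f_{124} = 1$), which then gives weight $1$ on the singleton cell $P_{123}$ (resp.\ $P_{124}$). The parity argument you use for the other four cells --- the sum of a polynomial of degree below the number of $\F_2$-variables is zero, so the weight is even and hence $\geq 2$ once a degree-two coefficient is seen to be nonzero --- is also a cleaner route to the bounds $2,2,1,1$ than the paper's appeal to the generic $(q-1)^2q$ estimate, and it correctly exploits the vanishing of the skew-symmetric minor $det_{123}$ to rule out the cubic term. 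Both arguments reach the same total $2+2+1+1+1+1 = 8 = q^3$, but yours is the one that is airtight.
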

\begin{proof}
Let $f$ be as in the hypothesis of the lemma. Without loss of generality we may assume $f_{125} = 1$ and $f_{234} = 0$. On $P_{456}$ and $P_{356}$ the function $det_{125}$ evaluates to $a_2a_3$ or $b_2b_3$ respectively. As this is the only minor which has the term $a_2a_3$ or $b_2b_3$ it follows that $wt_{P_456}(f), wt_{P_{356}}(f) \geq (q-1)^2q = 2$.

The function $det_{125}$ evaluates to $c_2c_5$ on $P_{246}$ but evaluates to $d_2d_3$ on $P_{236}$. This leads to the bound  $wt_{P_{246}}(f),wt_{P_{236}}(f) \geq 1$.

The function $det_{125}$ evaluates to $e_2$ on $P_{145}$ and evaluates to $e_2$ on $P_{135}$ This leads to the bound  $wt_{P_{145}}(f) \geq q-1=1$ and $wt_{P_{135}}(f) \geq q-1 = 1$
 Adding the different weights together we obtain:
 
 $$wt(f)\geq 2(2)+2(1)+2(1) = 8 = q^3.$$ \end{proof}

This leads to our main result.

\begin{theorem}\label{thm:orthogonaldistanceq2}
The minimum distance of the polar Orthogonal Grassmann code for $q=2$ is $d(\CO) = q^3$.
\end{theorem}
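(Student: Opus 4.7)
The plan is to deduce the theorem by combining the two preceding lemmas with the explicit codeword from Lemma~\ref{lem:min codew-orthogonal-even}. The upper bound is immediate: Lemma~\ref{lem:min codew-orthogonal-even} exhibits $det_{456}(\X)\in\Dfts$ with weight exactly $q^3$, so $d(\CO)\le q^3$. The remainder is to prove that every nonzero $f\in\Dfts$ satisfies $wt(f)\ge q^3$.

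To establish the lower bound, I would first apply Corollary~\ref{coll: submat conj} to normalize $f$. In even characteristic, $ev_{P_I}(det_A(\X))=ev_{P_I}(det_{A^*}(\X))$ for every pivot set $I$, so for each nonprincipal pair $\{A,A^*\}$ with $A\ne A^*$ the two determinants produce the same column of the codeword. Consequently, replacing $f$ by the function obtained by combining the coefficients $c_A+c_{A^*}$ onto $det_A$ and setting the $det_{A^*}$ coefficient to zero gives a function $\tilde f\in\Dfts$ with $ev(\tilde f)=ev(f)$. Thus without loss of generality we may assume that for each nonprincipal index $A$ in $supp(f)$, the companion $A^*$ is not in $supp(f)$.

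With this normalization, split the argument into two exhaustive cases according to whether $supp(f)$ contains a nonprincipal minor. If every $A\in supp(f)$ satisfies $A=A^*$ (the principal case), apply the first of the two preceding lemmas to conclude $wt(f)\ge q^3$. Otherwise, there is some nonprincipal $A\in supp(f)$, and by the normalization above $A^*\notin supp(f)$; the second lemma then gives $wt(f)\ge q^3$. Together these cover every nonzero $f\in\Dfts$, so $d(\CO)\ge q^3$, which matches the upper bound and completes the proof.

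The only real subtlety, and the step I would write most carefully, is justifying the normalization: one must verify that combining $det_A$ and $det_{A^*}$ through Corollary~\ref{coll: submat conj} genuinely preserves the evaluation on \emph{every} pivot set $P_I$ simultaneously, not just on a single one, and that this reduction is compatible with the hypotheses of Lemma~\ref{lem:principal123} and Lemma~\ref{lem:principal125} used inside the two cited lemmas. Once that bookkeeping is in place, the theorem follows essentially by assembling the existing pieces.
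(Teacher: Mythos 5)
Your proposal is correct and follows essentially the same route as the paper: the upper bound from Lemma~\ref{lem:min codew-orthogonal-even}, and the lower bound from the two lemmas splitting on whether the support contains a nonprincipal minor. The one thing you make explicit that the paper leaves implicit is the normalization via Corollary~\ref{coll: submat conj} (combining the coefficients of each pair $A,A^*$ onto a single minor, valid because every pivot set $I$ of $\Ots$ satisfies $I=I^*$ so the corollary applies uniformly across all $P_I$); this is genuinely needed to make the two lemmas' hypotheses exhaustive, and is a worthwhile addition rather than a deviation.
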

\subsubsection{$q$ even, $q>2$}

Now we compute the distance of polar Orthogonal Grassmann codes for $q$ even and $q \neq 2$. 
\begin{lemma}

Let $f \in \Dfts$ where $f \neq 0$. Suppose that no minors in the support of $f$ satisfy $I = I^*$. Then $wt(f) \geq q^3$
\end{lemma}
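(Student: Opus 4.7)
My plan is to estimate the weight of $f$ from its contributions on the two three-dimensional pivot cells $P_{456}$ and $P_{356}$ and show the sum already exceeds $q^3$. The starting observation, crucial in even characteristic, is that Corollary \ref{coll: submat conj} collapses each of the six non-principal $*$-pairs $\{I, I^*\}$ to a single effective coefficient $\alpha_j := f_{I_j} + f_{I_j^*}$; since the principal coefficients vanish by hypothesis, the codeword $ev_L(f)$ is determined by the six numbers $\alpha_1,\ldots,\alpha_6$ attached to the pairs $\{125, 134\}, \{126, 234\}, \{136, 235\}, \{146, 245\}, \{156, 345\}, \{256, 346\}$. Using Lemma \ref{lem:principal125} (or the column-permutation technique behind it) and a rescaling, I may assume $\alpha_1 = f_{125} + f_{134} = 1$.

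I then expand $f$ on $P_{456}$ by computing each of the twelve non-principal minors on the generic matrix representative. A direct calculation yields
\[
f|_{P_{456}}(a_2, a_3, a_5) = a_2 a_3 + \alpha_2\, a_2 a_5 + \alpha_3\, a_3 a_5 + \alpha_4\, a_2 + \alpha_5\, a_3 + \alpha_6\, a_5,
\]
and an analogous polynomial in $(b_2, b_3, b_5)$ on $P_{356}$ (with the affine roles of $\alpha_3$ and $\alpha_4$ swapped). Since $\alpha_1 = 1$, each admits the factorisation
\[
f|_{P_{456}} = (a_2 + L(a_5))(a_3 + M(a_5)) + R(a_5),
\]
where $L$ and $M$ are affine in $a_5$ and $R$ is a polynomial of degree at most two.

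The zero count on a cell is now elementary. For each fixed $a_5 = c$, the slice is a union of two intersecting lines ($2q-1$ zeroes) when $R(c) = 0$, or a single-branch hyperbola ($q-1$ zeroes) when $R(c) \neq 0$. Writing $Z$ for the number of roots of $R$ in $\F_q$, the total count of zeroes on $P_{456}$ is $Zq + q^2 - q$. Either $R \not\equiv 0$ with $Z \leq 2$, or $R \equiv 0$ with $Z = q$; in both cases the weight on $P_{456}$ is at least $q^3 - 2q^2 + q$, the worst bound arising when $R \equiv 0$ and the zero set becomes the union of two affine planes. The same bound holds on $P_{356}$ by the symmetric computation.

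Summing these two contributions gives $wt(f) \geq 2(q^3 - 2q^2 + q)$, and the inequality $2(q^3 - 2q^2 + q) \geq q^3$ reduces to $q^2 - 4q + 2 \geq 0$, which holds whenever $q \geq 4$. Since the hypothesis is $q$ even with $q > 2$, we have $q \geq 4$ and the bound follows. The main obstacle I anticipate is the careful bookkeeping of the twelve minor evaluations on each of the two large cells and the handling of the degenerate subcase $R \equiv 0$ (occurring, for instance, when $\alpha_2 = \alpha_4 = \alpha_5 = \alpha_6 = 0$), where the two-planes count must replace the cleaner hyperbola bound.
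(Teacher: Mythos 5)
Your argument is correct and reaches the stated bound, but takes a slightly different path from the paper. Both proofs begin the same way: you exploit Corollary~\ref{coll: submat conj} to collapse the twelve non-principal coefficients into the six pair sums $\alpha_j = f_{I_j} + f_{I_j^*}$, and then use Lemma~\ref{lem:principal125} together with a rescaling to normalize $\alpha_1 = 1$. (Note that this normalization, like the paper's, tacitly assumes the codeword $ev_L(f)$ is nonzero, i.e. at least one $\alpha_j \neq 0$; if all $\alpha_j$ vanish then the hypothesis "$f \neq 0$" as a polynomial is not enough, but this gap is shared with the paper's own phrasing.) From there the two proofs diverge. The paper uses the single monomial $a_2a_3$ coming from $det_{125}$ to get the crude slicewise bound $wt_{P_I}(f) \geq (q-1)^2 q$, $(q-1)^2$, $(q-1)$ on \emph{all six} nontrivial pivot cells $P_{456}$, $P_{356}$, $P_{246}$, $P_{236}$, $P_{145}$, $P_{135}$, summing to $2q^3 - 2q^2 > q^3$ for $q > 2$. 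You instead restrict attention to the two three-dimensional cells $P_{456}$ and $P_{356}$, compute the full quadratic restriction $a_2a_3 + \alpha_2 a_2a_5 + \alpha_3 a_3a_5 + \alpha_4 a_2 + \alpha_5 a_3 + \alpha_6 a_5$ (which I verified term by term), and count zeroes by the factorization $(a_2+L)(a_3+M) + R(a_5)$, distinguishing $R\equiv 0$ from $R \not\equiv 0$. Your worst case $R \equiv 0$ reproduces the paper's per-cell bound $(q-1)^2 q$, and summing just the two contributions gives $2(q-1)^2 q \geq q^3$, which reduces to $q^2 - 4q + 2 \geq 0$, valid since $q$ even and $q>2$ forces $q \geq 4$. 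The paper's six-cell sum gives a tighter global bound and works as soon as $q>2$, while your two-cell argument requires the observation that $q$ even with $q>2$ means $q \geq 4$; in exchange you avoid computing on the four lower-dimensional cells and get a cleaner picture of how $f$ behaves on the big cells via the conic dichotomy.
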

\begin{proof}
Let $f$ be as in the hypothesis of the lemma. After applying Lemma \ref{lem:principal125} we may assume $f_{125} = 1$ and $f_{134} = 0.$  Now we shall estimate the weight of the evaluation of $f$ by considering the contributions given by $det_{125}$ on $P_{456}$ and $P_{356}$.
 
 The matrices in $P_{456}$ are of the form $\begin{bmatrix}
0 & a_2 & a_3 & 0 & 0 & 1\\
-a_2 & 0 & a_5 & 0 & 1 & 0\\
-a_3 & -a_5 & 0 & 1 & 0 & 0\\
\end{bmatrix}$. When we evaluate $f$ on the matrices $M_W$ note that no term of the form $a_2^2, a_3^2 , a_5^2$ appears because we have no minors satisfying $I = I^*$.  Since $det_{125}(M_W) =  a_2a_3$ we estimate $wt_{P_{456}}(f) \geq (q-1)^2q$

Recall that $P_{356}$ is of the form $  \begin{bmatrix}
0 & b_2 & 0 & b_3 & 0 & 1\\
-b_2 & 0 & 0 & b_5 & 1 & 0\\
-b_3 & -b_5 & 1 & 0 & 0 & 0\\
\end{bmatrix}$. As in the case of $P_{456}$ when we evaluate $f$ on the matrices $M_W$ note that no term of the form $b_2^2, b_3^2 , b_5^2$ appears because we have no minors satisfying $I = I^*$. In this case $det_{125}(M_W) =  b_2b_3$  and therefore we estimate $wt_{P_{356}}(f) \geq (q-1)^2q$. Now we compare the weights on $P_{246}$ and $P_{236}$. The matrices in $P_{246}$ are of the form $\begin{bmatrix}
0 & 0 & c_2 & 0 & c_3 & 1\\
-c_2 & 0 & 0 & 1 & 0 & 0\\
-c_3 & 1 & 0 & 0 & 0 & 0\\
\end{bmatrix}$. As in the case of the previous pivots, no term of the form $c_2^2$, $c_3^2$ appears. The determinant $det_{125}(M_W) =  c_2c_3$ we estimate $wt_{P_{246}}(f) \geq (q-1)^2$. The matrices in $P_{236}$ are of the form $\begin{bmatrix}
0 & 0 & 0 & c_2 & c_3 & 1\\
-c_2 & 0 & 1 & 0 & 0 & 0\\
-c_3 & 1 & 0 & 0 & 0 & 0\\
\end{bmatrix}$. As in the case of the previous pivots, no term of the form $c_2^2$, $c_3^2$ appears. The determinant $det_{125}(M_W) =  c_2c_3$ we estimate $wt_{P_{236}}(f) \geq (q-1)^2$

The matrices in  $P_{145}$ are of the form  $\begin{bmatrix}
0 & 0 & e_2 & 0 & 1 & 0\\
0 & -e_2 & 0 & 1 & 0 & 0\\
1 & 0 & 0 & 0 & 0 & 0\\
\end{bmatrix}$. In this case $det_{125}(M_W) = e_2$ and $wt_{P_{145}}(f) \geq (q-1)$.  The matrices in  $P_{135}$ are of the form  $\begin{bmatrix}
0 & 0 & 0 & e_2 & 1 & 0\\
0 & -e_2 & 1 & 0 & 0 & 0\\
1 & 0 & 0 & 0 & 0 & 0\\
\end{bmatrix}$. In this case $det_{125}(M_W) = e_2$ and $wt_{P_{135}}(f) \geq (q-1)$
 
 Adding the different weights together we obtain:
 
 $$wt(f)\geq 2(q-1)^2q + 2(q-1)^2 +2(q-1) = 2q^3-2q^2 > q^3.$$ \end{proof}
 
 Now we solve the remaining case for $q > 2$, which is when $f$ has a principal minor in its support.
 \begin{lemma}

Let $f \in \Dfts$ where $f \neq 0$. Suppose there is a minor $I \in supp(f)$ such that $I = I^*$. Then $wt(f) \geq q^3$.
\end{lemma}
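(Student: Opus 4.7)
By Lemma \ref{lem:principal123} we may rescale and permute columns so that, without loss of generality, $f_{123}=1$. Corollary \ref{coll: submat conj} further allows us to collect each pair $\{\det_A,\det_{A^*}\}$ of non-principal minors into a single representative, since they agree as functions on every $P_I$ in characteristic $2$. After these reductions $f$ is specified by eight principal coefficients (indexed by the pivot sets) together with six non-principal orbit coefficients, with the normalization $f_{123}=1$.

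The four pivot sets on which $\det_{123}$ is nonzero are $P_{356}, P_{246}, P_{145}$, and $P_{123}$, where it evaluates to $b_2^2$, $c_2^2$, $e_2^2$ and $1$ respectively. On each of these pivot sets the restriction $f|_{P_I}$ therefore has the highest-index free variable appearing as a pure square with leading coefficient $1$. On $P_{356}$, for example,
$$f|_{P_{356}}=b_2^2 + R(b_3,b_5)\,b_2 + S(b_3,b_5),$$
where $R(b_3,b_5)=f_{125}b_3+f_{126}b_5+f_{136}$ gathers the contributions of the non-principal orbits whose evaluations are linear in $b_2$, and $S$ collects the remaining principal and non-principal terms. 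For fixed $(b_3,b_5)$ this is an Artin--Schreier-type quadratic in $b_2$: it has $1$ solution when $R=0$ and either $0$ or $2$ solutions when $R\neq 0$, the latter alternative determined by whether the absolute trace $\mathrm{Tr}_{\F_q/\F_2}(S/R^2)$ vanishes. Analogous decompositions hold on $P_{246}, P_{145}, P_{123}$.

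The proof splits on whether non-principal minors lie in $supp(f)$. When only principal minors do (Case A), every $R$ vanishes identically and $f|_{P_I}$ simplifies to $L^2+\beta$ for a nontrivial $\F_q$-linear form $L$ in the free parameters (it contains the highest variable with coefficient $1$). The zero set of $L^2+\beta$ is the affine hyperplane $L=\sqrt{\beta}$, contributing exactly $|P_I|/q$ zeros, so the weights on $P_{356},P_{246},P_{145},P_{123}$ are $q^3-q^2$, $q^2-q$, $q-1$ and $1$, which telescope to exactly $q^3$. In Case B the presence of non-principal minors introduces mixed and linear terms in the Artin--Schreier equation $b_2^2+Rb_2+S=0$, and the weight on each of these four pivot sets may drop below its Case A value. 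However, the same non-principal minors contribute nontrivially on the companion pivot sets $P_{456},P_{236},P_{135},P_{124}$ on which $\det_{123}$ vanishes, producing weight exactly of the form estimated in the preceding no-principal-minor lemma; pairing each $P_I$ with its companion under the column swap $3\leftrightarrow 4$ shows that the losses are compensated and the total remains at least $q^3$.

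The main technical obstacle is this bookkeeping in Case B: one must show that for each configuration of non-principal orbit representatives appearing in $f$, the Artin--Schreier losses on $P_{356},P_{246},P_{145}$ are matched or exceeded by the corresponding Artin--Schreier gains on $P_{456},P_{236},P_{135}$. A subcase analysis on which of the six non-principal orbits lie in $supp(f)$ suffices, reducing each to a polynomial counting problem already treated in the preceding lemma and yielding $wt(f)\geq q^3$.
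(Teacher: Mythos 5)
Your Case A (only principal minors, after collapsing conjugate pairs using Corollary \ref{coll: submat conj}) agrees with the paper: on $P_{356},P_{246},P_{145},P_{123}$ the restriction of $f$ is a sum of pure squares plus a constant, the Frobenius bijection $x\mapsto x^2$ turns the zero set into an affine hyperplane, and the four exact weights $q^3-q^2$, $q^2-q$, $q-1$, $1$ sum to $q^3$. That part is fine.

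Case B is a genuine gap. You write that the Artin--Schreier losses on the $\det_{123}$-pivot sets are ``compensated'' by gains on the companion pivot sets under the swap $3\leftrightarrow 4$, and that ``a subcase analysis \ldots suffices.'' That sentence asserts the conclusion without establishing it: to actually run a compensation argument you would have to control, for each configuration of nonprincipal orbit coefficients, the count of $(b_3,b_5)$ with $R\neq 0$ and vanishing trace of $S/R^2$, and then show the resulting deficit is exactly offset on $P_{456},P_{236},P_{135}$. None of this is carried out, and it is not a routine step. The paper avoids all of it. Because $f_{123}=1$, the coefficient of $b_2^2$ (resp.\ $c_2^2$, $e_2^2$) is $1$, so for each choice of the remaining free parameters there are at most two roots in the top variable. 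That gives the unconditional bound $wt(f)\geq (q-2)q^2+(q-2)q+(q-2)+1=q^3-q^2-q-1$ for \emph{any} coefficient configuration, with no trace analysis. If some nonprincipal orbit has nonzero net coefficient (say $f_{125}+f_{134}\neq 0$), the evaluation on $P_{456}$ retains the cross term $a_2a_3$, yielding $wt_{P_{456}}(f)\geq (q-1)^2q$, and the two bounds add to $2q^3-3q^2-1\geq q^3$ for $q\geq 4$. If every nonprincipal orbit cancels, $f$ is effectively principal and you are back in your Case A. So the paper closes Case B with a two-line addition of disjoint lower bounds rather than a trace-pairing argument; your proposal leaves precisely that step as an unproved claim.
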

\begin{proof}
Let $f$ be as in the hypothesis of the lemma. Applying Lemma \ref{lem:principal123} and rescaling, we may assume $f_{123} = 1$.

Recall that $P_{356}$ is of the form $  \begin{bmatrix}
0 & b_2 & 0 & b_3 & 0 & 1\\
-b_2 & 0 & 0 & b_5 & 1 & 0\\
-b_3 & -b_5 & 1 & 0 & 0 & 0\\
\end{bmatrix}$. The evaluation of $f$ on $M_W$ for $M_W \in P_{356}$ contains the monomial $b_2^2$ Therefore $wt_{P_{356}}(f) = (q-2)q^2$.

The matrices in $P_{246}$ are of the form $\begin{bmatrix}
0 & 0 & c_2 & 0 & c_3 & 1\\
-c_2 & 0 & 0 & 1 & 0 & 0\\
-c_3 & 1 & 0 & 0 & 0 & 0\\
\end{bmatrix}$. A similar argument as in the previous case yields $wt_{P_{246}}(f) = (q-2)q$.

The matrices in  $P_{145}$ are of the form  $\begin{bmatrix}
0 & 0 & e_2 & 0 & 1 & 0\\
0 & -e_2 & 0 & 1 & 0 & 0\\
1 & 0 & 0 & 0 & 0 & 0\\
\end{bmatrix}$. Therefore $wt_{P_{145}}(f) = (q-2)$.
 Note that $P_{123}$ is the set containing the matrix $\begin{bmatrix}
0 & 0 & 1 & 0 & 0 & 0\\
0 & 1 & 0 & 0 & 0 & 0\\
1 & 0 & 0 & 0 & 0 & 0\\
\end{bmatrix}$. Therefore $wt_{P_{123}}(f) = 1$,
 Adding the different weights together we obtain:
 
 $$wt(f)\geq wt_{P_{456}}(f) + wt_{P_{246}}(f) + wt_{P_{145}}(f) + wt_{P_{123}}(P)$$
 $$wt(f) \geq (q-2)q^2+(q-2)q +(q-2) +1= q^3-q^2-q-1.$$ We shall use this as our basic bound. Now we shall assume if certain coefficients are nonzero, then $wt(f)$ is too large.
 
 If $f$ has a nonprincipal minor which evaluates to $0$, for example $f_{125} \neq 0 $ and $f_{134} = 0$. Then $f$ evaluates to a nonzero function on $P_{456}$.  (Recall that $det_{125}+det_{134}$ evaluates to $0$). In this case $wt_{P_{456}}(f) \geq (q-1)^2q$.  The bound on $wt(f)$ becomes   $$wt(f)\geq q^3-q^2-q-1 + q^3-2q^2+q = 2q^3-3q^2-1.$$ Since $q \geq 4$ this bound is larger than $q^3$.
 
 Now we shall assume $f$ has no nonprincipal minors. On the different pivot sets the principal minors evaluate to either $0$, $1$, or $T^2$, for some variable $T.$ Since we are in characteristic $2$, and the evaluation of $f$ only has squares of variables, the weight bound is improved to  $$wt(f)\geq  (q-1)q^2+(q-1)q + (q-1) +1= q^3.$$ 
 \end{proof}

 Our main result is thus.
 
 \begin{theorem}
 
 The minimum distance of the code $\CO$ is $$
d(\CO)=
\begin{cases}
q^3, q \makebox{ even }\\
q^3-q^2, q \makebox{ odd }\\
\end{cases}
$$
 \end{theorem}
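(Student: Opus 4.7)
The plan is to combine the explicit codeword constructions (Lemmas \ref{lem:min codew-orthogonal-odd} and \ref{lem:min codew-orthogonal-even}) with the matching lower bounds established case by case in the preceding subsections. The upper bounds are immediate: for $q$ odd, the codeword $ev(det_{236}(\X) - det_{456}(\X))$ has weight exactly $q^3 - q^2$, while for $q$ even (both $q = 2$ and $q > 2$), $ev(det_{456}(\X))$ has weight exactly $q^3$. Each of these is a nonzero codeword of $\CO$, so $d(\CO)$ is bounded above by the claimed value in each characteristic.

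For the lower bound, I would argue as follows. Given any nonzero $f \in \Dfts$, its support either (a) contains some principal minor $I$ with $I = I^*$, or (b) contains only nonprincipal minors. In case (a), Lemma \ref{lem:principal123} allows us to reduce to $f_{123} = 1$ after an automorphism preserving weight; in case (b), Lemma \ref{lem:principal125} reduces to $f_{125} = 1$. Each of these reduced situations is treated by one of the weight lemmas already proved in the relevant subsection: for $q$ odd the two lemmas bounding $wt(f) \geq q^3 - q^2$, for $q = 2$ the two lemmas bounding $wt(f) \geq q^3$, and for $q$ even with $q > 2$ the two lemmas bounding $wt(f) \geq q^3$. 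Since every nonzero $f$ falls into exactly one of these cases, the global bound on $d(\CO)$ matches the upper bound exhibited by the explicit codewords.

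The proof is therefore essentially an assembly step: for each value of the characteristic parity (and, within the even case, the split between $q=2$ and $q>2$), combine the upper bound from the explicit minimum weight codeword with the dichotomy on $supp(f)$ and invoke the two corresponding weight lemmas. I do not anticipate any genuine obstacle here since all the analytical work has been carried out in the preceding sections; the only point worth being careful about is making sure that the reductions based on Lemmas \ref{lem:principal123} and \ref{lem:principal125} are applicable (they require $f$ to be nonzero, which is true since we are computing the minimum distance), and that the case split (principal minor in support vs.\ no principal minor in support) is exhaustive, which it clearly is. The resulting statement is exactly the piecewise formula claimed.
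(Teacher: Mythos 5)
Your overall plan is correct and mirrors what the paper does (implicitly): the explicit codewords from Lemmas~\ref{lem:min codew-orthogonal-odd} and~\ref{lem:min codew-orthogonal-even} supply the upper bounds, and the case-by-case lower bound lemmas supply the matching lower bounds. The one place your proposed case split does not quite line up with the lemmas as stated is the subcase $q = 2$. You assert a uniform dichotomy, ``$supp(f)$ contains a principal minor'' versus ``$supp(f)$ contains only nonprincipal minors,'' and this is indeed the dichotomy used for $q$ odd and for $q$ even with $q > 2$. But the two $q=2$ lemmas hypothesize, respectively, that \emph{every} minor in $supp(f)$ is principal, and that there exists a nonprincipal $I$ with $I \in supp(f)$ and $I^* \notin supp(f)$. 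These two hypotheses are not exhaustive on the nose: a nonzero $f$ whose support contains a nonprincipal $I$ together with its companion $I^*$ (with equal coefficients, say) fits neither. To make your assembly watertight there you need one extra reduction, namely that in even characteristic $\det_I + \det_{I^*}$ lies in the kernel of $ev_L$ (this is Corollary~\ref{coll: submat conj}), so one may replace $f$ by an $f'$ with $ev_L(f')=ev_L(f)$ in which, for each nonprincipal pair, at most one of $I, I^*$ survives in the support; if $ev_L(f)\neq 0$ then $f'\neq 0$, and after this normalization the two $q=2$ lemma hypotheses really are exhaustive. The paper itself does not spell this reduction out either, so your proposal is at the same level of rigor as the source; but since you explicitly claim the case split ``is exhaustive, which it clearly is,'' it is worth flagging that the $q=2$ version of exhaustiveness only holds after passing to a representative of the codeword modulo $\ker(ev_L)$.
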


 We also check \cite{Grassl:codetables} and confirm that for $q=2$ the code $\CO$ is a $[30,14,8]_2$ code which is optimal.

\section{Conclusion}
In this paper we have determined the minimum distance of the polar orthogonal Grassmann codes $\CO$ using elementary techniques for even and odd characteristic. We hope to apply these techniques to other parameters of polar Grassmannians.

\section*{Acknowledgments} The authors thank Bart De Bruyn, Ilaria Cardinali, Luca Giuzzi and Sudhir Ghorpade for their thoughtful discussions and marvelous suggestions to improve this manuscript. This research is supported by NSF-DMS REU 1852171: REU Site: Combinatorics, Probability, and Algebraic Coding Theory and NSF-HRD 2008186: Louis Stokes STEM Pathways and Research Alliance: Puerto Rico-LSAMP - Expanding Opportunities for Underrepresented College Students (2020-2025)


\begin{thebibliography}{10}

\bibitem{blok2010generating}
Rieuwert~J. Blok and Bruce~N. Cooperstein.
\newblock The generating rank of the unitary and symplectic grassmannians,
  2010.

\bibitem{DB10}
Bart~De Bruyn.
\newblock On the {G}rassmann-embeddings of the {H}ermitian dual polar spaces.
\newblock {\em Linear and Multilinear Algebra}, 56(6):665--677, 2008.

\bibitem{DP08}
Bart~De Bruyn and Harm Pralle.
\newblock The hyperplanes of dh(5, q2).
\newblock {\em Forum Mathematicum}, 20(2):239--264, 2008.

\bibitem{Symplectic}
Ilaria Cardinali and Luca Giuzzi.
\newblock Minimum distance of symplectic {G}rassmann codes.
\newblock {\em Linear Algebra and its Applications}, 488, 03 2015.

\bibitem{Cardinali}
Ilaria Cardinali and Luca Giuzzi.
\newblock Line {H}ermitian {G}rassmann codes and their parameters.
\newblock {\em Finite Fields and Their Applications}, 51:407–432, May 2018.

\bibitem{Cardinali_2018}
Ilaria Cardinali and Luca Giuzzi.
\newblock Minimum distance of orthogonal line-{G}rassmann codes in even
  characteristic.
\newblock {\em Journal of Pure and Applied Algebra}, 222(10):2975–2988, Oct
  2018.

\bibitem{CG18}
Ilaria Cardinali and Luca Giuzzi.
\newblock Implementing line-{H}ermitian {G}rassmann codes.
\newblock {\em Linear Algebra and its Applications}, 580:96--120, 2019.

\bibitem{cardinali2014line}
Ilaria Cardinali, Luca Giuzzi, and Antonio Pasini.
\newblock Line polar {G}rassmann codes of orthogonal type, 2014.

\bibitem{PolarSurvey}
Ilaria Cardinali, Luca Giuzzi, and Antonio Pasini.
\newblock {G}rassmann embeddings of polar {G}rassmannians.
\newblock {\em Journal of Combinatorial Theory, Series A}, 170:105--133, Feb
  2020.

\bibitem{COOPERSTEIN1997849}
B.N Cooperstein.
\newblock On the generation of dual polar spaces of unitary type over finite
  fields.
\newblock {\em European Journal of Combinatorics}, 18(8):849--856, 1997.

\bibitem{Grassl:codetables}
Markus Grassl.
\newblock {Bounds on the minimum distance of linear codes and quantum codes}.
\newblock Online available at {http://www.codetables.de}, 2007.
\newblock Accessed on 2022-09-01.

\bibitem{Lidl:1997}
Rudolf Lidl and Harald Niederreiter.
\newblock {\em Finite Fields}.
\newblock Cambridge University Press, 1997.

\bibitem{Nogin}
D.~Yu. Nogin.
\newblock {\em Codes associated to {G}rassmannians in: "Arithmetic, Geometry,
  and Coding Theory" (Ruud Pellikaan and Marc Perret and Serge G. Vlăduţ
  Eds.)}, pages 145--154.
\newblock De Gruyter, 2011.

\bibitem{R1}
C.T. Ryan.
\newblock An application of {G}rassmann varieties to coding theory.
\newblock {\em Congressus Numerantium}, 57:257--271, 1987.

\bibitem{R2}
C.T. Ryan.
\newblock Projective codes based on {G}rassmann varieties.
\newblock {\em Congressus Numerantium}, 57:273--279, 1987.

\end{thebibliography}
\end{document}